\documentclass[letterpaper, 10 pt, conference]{ieeeconf}
\usepackage[utf8]{inputenc}
\usepackage{amsmath}
\usepackage{amsfonts}
\usepackage{amssymb}
\usepackage{cite}
\usepackage{hyperref}
\usepackage{subcaption}
\let\labelindent\relax
\usepackage{enumitem}   
\usepackage{graphicx}   
\usepackage{color}
\usepackage{graphicx}
\usepackage{float}
\usepackage{comment}
\usepackage{tikz}
\usepackage{pgfplots}
\usepackage{standalone}
\usepackage[nospread, noshrink]{cuted}
\usepackage{multicol}
\usepackage{dblfloatfix} 
\usepackage{afterpage}

\usepackage{amsthm}
\newtheoremstyle{ieeeconf}
{0pt}   
{0pt}   
{\normalfont}  
{\parindent}       
{\itshape} 
{:}         
{ } 
{\thmname{#1} \thmnumber{#2}\thmnote{ (#3)}} 
\makeatletter
\renewenvironment{proof}[1][\proofname]{\par
\pushQED{\qed}%
\normalfont \topsep\z@
\trivlist
\item[\hskip2em
\itshape
#1\@addpunct{:}]\ignorespaces
}{%
\popQED\endtrivlist\@endpefalse
}
\makeatletter

\theoremstyle{ieeeconf}
\newtheorem{theorem}{Theorem}   
\newtheorem{lemma}{Lemma}
\newtheorem{problem}{Problem}

\newtheorem{example}{Example}
\newtheorem{remark}{Remark}

\newtheorem{definition}{Definition}

\DeclareMathOperator{\im}{im}

\newcommand{\R}{\mathbb{R}}

\newcommand{\N}{\mathbb{N}}
\newcommand{\D}{\mathcal{D}}

\newcommand{\B}{\mathcal{B}}

\newcommand{\set}[2]{\left\{ #1 \;\middle| \; #2 \right\}}
\usetikzlibrary{shapes,arrows}
\usetikzlibrary{arrows,calc,positioning}
\tikzset{
block/.style = {draw, rectangle,
minimum height=1.5cm,
minimum width=3cm},
input/.style = {coordinate,node distance=5cm},
output/.style = {coordinate,node distance=2cm},
arrow/.style={draw,-latex, node distance=5cm},
sum/.style = {draw, circle, node distance=1cm},
}
\IEEEoverridecommandlockouts
\begin{document}
\title{\LARGE \bf A Generalized Stability Theorem for Interconnections of Dissipative Behaviors}
\author{Michaela Repášová and Henk J. van Waarde
\thanks{Michaela Repášová and Henk J. van Waarde are with the Bernoulli Institute for Mathematics, Computer Science, and Artificial Intelligence, University of Groningen, Nij\-enborgh 9, 9747 AG, Groningen, The Netherlands. Email: {\tt\small m.repasova@rug.nl,h.j.van.waarde@rug.nl}.
}%
\thanks{
	Henk J. van Waarde acknowledges financial support by the Dutch Research Council (NWO) under the Talent Programme Veni Agreement (VI.Veni.22.335).
}%
}

\maketitle
\thispagestyle{empty}
\pagestyle{empty}

\begin{abstract}
Dissipativity theory enables the stability analysis of complex, interconnected systems through the physical properties of their components. Celebrated examples of such stability results include the small gain and passivity theorems. In this paper, we prove a unifying stability theorem for dissipative linear time-invariant (LTI) systems. We use the behavioral approach to express storage functions and supply rates in terms of quadratic differential forms (QDFs). Our main result establishes conditions for the stability of an interconnection of two LTI behaviors that are both dissipative with respect to general, dynamic supply rates. As special cases, we recover existing results for static supply rates, such as those capturing finite $L_2$-gain and passivity, as well as dynamic ones capturing, for example, delta dissipativity. 
The key idea of the paper is to construct a Lyapunov function as a sum of storage functions \emph{and} a coupling QDF, thereby allowing for a richer class of Lyapunov functions than existing works dealing with just sums of storage functions.
\end{abstract}

\section{Introduction}
Dissipativity is one of the cornerstones of systems and control theory. It was introduced by Jan Willems in the 1970's \cite{willems_dissipative_1972,willems_dissipative_1972-1}, as a general type of input-output stability. A system is said to be dissipative if the amount of energy stored at a given time cannot exceed the amount that has been supplied to the system in the past. This stored energy is captured by a storage function, while the energy supplied to the system is determined by a supply rate. 
Supply rates can be either \emph{static} or \emph{dynamic}, depending on whether the supplied energy is a static function of the external variables or also depends on their derivatives.
What makes dissipativity so useful is that it allows the (stability) analysis of interconnected systems through the
properties of their individual components.

Famous examples of stability theorems based on dissipativity include the small gain and passivity theorems. Early works in this direction date back to contributions by Zames in \cite{zames_input-output_1966}, and, by now, several versions of these results exist for different system descriptions such as input-output operators and state-space models \cite{desoer_feedback_2009,van_der_schaft_dissipative_2017}.  
Moreover, various extensions of these stability theorems exist, for example, to general static supply rates \cite{arcak_stability_2016,van_der_schaft_dissipative_2017}.  
In \cite{xiong_negative_2010}, the authors have shown conditions under which interconnections of so-called negative imaginary systems are stable. One can view the negative imaginary property as a type of dissipativity with a dynamic supply rate where a derivative of the output appears. Yet another example of dissipativity with respect to a dynamic supply rate is delta dissipativity, involving a derivative in both the input and output, for which a stability result is given in \cite{schweidel_compositional_2022}. 

Despite these research efforts, the literature on the stability of system interconnections is seemingly disconnected. Indeed, for each of the individual supply rates, the stability results were proven independently and with different system descriptions. There is no overarching theory explaining all these results. It appears that the inclusion of dynamics in the supply rates affects the stability in complex ways. In fact, for the small gain and passivity theorems, an established idea is to prove stability of the interconnection by means of a Lyapunov function that is a sum of storage functions of the individual system components. However, as we will show, this is not always possible for interconnections of negative imaginary systems. Nonetheless, all of the aforementioned stability results have much in common. They consist of conditions on the external behavior of dissipative systems under which their interconnection is stable. It is of interest to find what unites these results, and to develop a general theory applicable to systems that are dissipative with respect to general dynamic supply rates.

In this paper, we adopt the behavioral approach \cite{willems_time_1986I,willems_time_1986II,polderman_introduction_1998} to systems and control. We thus view systems as sets of trajectories, and define dissipativity using the concept of quadratic differential forms (QDFs) \cite{willems_quadratic_1998} to formalize both storage functions and supply rates. We believe behavioral theory is ideally suited for studying the stability of system interconnections because it allows for general system interconnections beyond feedback \cite{willems_interconnections_1997}, and naturally incorporates general dynamic supply rates as QDFs.

This paper contains two stability results for an interconnection of two dissipative linear time-invariant (LTI) behaviors. Our first theorem (Theorem~\ref{t: first diss inter thm}) states that an interconnection of two dissipative behaviors is (asymptotically) stable if there exists a weighted sum of the supply rates that is nonpositive. This can be viewed as the behavioral extension of the established idea of defining a Lyapunov function as a sum of storage functions. Note that such an extension was also studied in \cite{takaba_stability_2005}, under different technical assumptions; see Remark \ref{r: another behavioral}. 
Theorem~\ref{t: first diss inter thm} generalizes many results like the small gain theorem, passivity theorem, and an interconnection result for delta dissipative systems. However, it is not applicable to negative imaginary systems. This motivates our main contribution: a general, unifying stability theorem for system interconnections (Theorem~\ref{t: main diss inter thm}). It states that the interconnection of two dissipative behaviors is (asymptotically) stable if there exists a coupling QDF depending on the supply rates such that the weighted sum of storage functions minus the coupling QDF is nonnegative. With this result, we provide a more general class of Lyapunov functions that, in addition, allows for indefinite storage functions. 

\emph{Outline}: In Section \ref{sec:Preliminaries}, we introduce preliminaries of the behavioral theory of dynamical systems, quadratic differential forms, stability, and dissipativity. In Section \ref{sec:problem statement}, we introduce the problem, and in Section \ref{sec:first theorem}, we state and prove our first result and provide some examples. Lastly, in Section \ref{sec:main theorem}, we state and prove our main result and provide additional examples.

\emph{Notation}:
We denote by ${\mathcal{C}^\infty(\R,\R^q)}$ the space of infinitely differentiable functions mapping from $\R$ to $\R^q$. We define
\begin{equation*}
\mathcal{D}(\R,\R^q):=\set{w \in \mathcal{C}^\infty(\R,\R^q)}{w \text{ has compact support}}.
\end{equation*}We denote by $\mathbb{W}^{\mathbb{T}}$ the collection of all maps from $\mathbb{T}$ to $\mathbb{W}$. 
Moreover, ${\R^{p \times q}[\xi]}$ denotes the set of real polynomial matrices of size $p \times q$ in the indeterminate $\xi$.
Let $f : \R \to \R^q$ and $K \in \R^{r \times q}$. We denote by $Kf$ the function $t \mapsto Kf(t)$ and, if $r=q$, by $f^\top Kf$ the function $t\mapsto f(t)^\top Kf(t)$.
\section{Preliminaries}
\label{sec:Preliminaries}
\subsection{Behavioral theory}
In this section, we recall a few definitions from the behavioral theory of linear time-invariant dynamical systems. 

\begin{definition}[\hspace{-0.1ex}{\cite[Definition~1]{willems_time_1986I}}]
A \emph{dynamical system} is defined as a triple $\Sigma=(\mathbb{T}, \mathbb{W},\B)$ with $\mathbb{T} \subseteq \R$ the \emph{time set}, $\mathbb{W}$ the \emph{signal alphabet}, and $\B \subseteq \mathbb{W}^{\mathbb{T}}$ the \emph{behavior} of the system. 
\end{definition}

In this paper, we consider linear time-invariant (LTI) dynamical systems of the form $\Sigma=(\R,\R^q,\B)$ where $\B$ is given by 
\vspace{-1.5ex}
\begin{equation}
\B = \set{w\in \mathcal{C}^\infty(\R,\R^q)}{R\left(\frac{d}{dt}\right)w=0},
\label{behavior}
\end{equation} 
for some $R \in \R^{p \times q}[\xi]$. 
We denote by $\mathcal{L}^q$ the set of all behaviors $\B$ of the form \eqref{behavior}. Note that in order to avoid mathematical technicalities, we focus on infinitely differentiable trajectories. However, all of the results of this paper can be extended to behaviors defined in terms of locally integrable trajectories of LTI systems.

Let $\B \in \mathcal{L}^q$
and  $R \in \R^{p \times q}[\xi]$. We call \vspace{-0.5ex}
\begin{equation}
R\left(\frac{d}{dt}\right)w=0
\label{ker rep}
\end{equation} a \emph{kernel representation of} $\B$ if \eqref{behavior} holds. In this case, we use the notation $\B =\ker R\left(\frac{d}{dt}\right)$. 

Lastly, we discuss the controllability of behaviors.
\begin{definition}[\hspace{-0.01ex}{\cite[Definition~5.2.2]{polderman_introduction_1998}}]
A behavior $\B \in \mathcal{L}^q$ is \emph{controllable} if for each $w_1,w_2 \in \B$ there exists a $t_1 \geq 0$ and a trajectory $w\in \B$ such that $w(t)=w_1(t)$ for $t\leq0$ and ${w(t)=w_2(t-t_1)}$ for $t\geq t_1$.
\end{definition} 
Let $M \in \R^{q \times r}[\xi]$. We call 
\vspace{-1.5ex}
\begin{equation}
w=M \left(\frac{d}{dt}\right)l
\label{image rep}
\end{equation}
an \emph{image representation of} $\B$ if 
\begin{equation}
\B=\set{ M\left(\frac{d}{dt}\right)l}{ l\in \mathcal{C}^\infty(\R,\R^r)}.
\label{B in image form}
\end{equation}
In this case, we use the notation $\B =\im M\left(\frac{d}{dt}\right)$. 
Controllable systems are exactly those that admit image representations \cite{polderman_introduction_1998}. That is, $\B \in \mathcal{L}^q$ is controllable if and only if there exists an $r\in \N$ and an $M \in \R^{q \times r}[\xi]$ such that \eqref{B in image form} holds.

\subsection{Quadratic Differential Forms}
In this section, we discuss quadratic differential forms (QDFs) as introduced in \cite{willems_quadratic_1998}.

Let $\R_s^{q \times q}[\zeta,\eta]$ denote the set of real symmetric polynomial matrices in the (commuting) indeterminates $\zeta$ and $\eta$. Explicitly, an element $\Phi 
\in \R_s^{q \times q}[\zeta,\eta]$ is given by 
\begin{equation}
\Phi(\zeta,\eta)=\sum_{k,l=0}^N\Phi_{kl}\zeta^k\eta^l,
\label{2var pol def}
\end{equation}
where $\Phi_{kk}\in \R^{q \times q}$ are symmetric matrices and $\Phi_{kl}=\Phi_{lk}^\top$ for all ${k,l \in \{0,1,\dots,N\}}$. We call the smallest $N$ for which \eqref{2var pol def} holds
the \emph{order} of $\Phi$. Such $\Phi$ induces a so-called \emph{quadratic differential form} (QDF)
\begin{equation*}
Q_\Phi : \mathcal{C}^\infty (\R,\R^{q}) \to \mathcal{C}^\infty (\R,\R),
\vspace{-1ex}
\end{equation*}
defined by 
\vspace{-1ex}
\begin{equation*}
\left( Q_\Phi(w)\right)(t)=\sum_{k,l=0}^N \left(\frac{d^kw}{dt^k}(t)\right)^\top \Phi_{kl} \left(\frac{d^lw}{dt^l}(t)\right).
\end{equation*}
We associate with the QDF $Q_\Phi$ 
the coefficient matrix ${\tilde{\Phi}\in \R^{(N+1)q \times (N+1)q}}$ with block entries $\Phi_{ij}$
such that \vspace{-0.5ex}
\begin{equation*}
\left(Q_\Phi(w)\right)(t) =  \begin{bmatrix}
	w(t) \\
	\frac{dw}{dt}(t) \\ \vdots \\ \frac{d^Nw}{dt^N}(t)
\end{bmatrix}^\top \tilde{\Phi} \begin{bmatrix}
	w(t) \\
	\frac{dw}{dt}(t) \\ \vdots \\ \frac{d^Nw}{dt^N}(t)
\end{bmatrix}.
\end{equation*}
We say that the QDF $Q_\Phi$ is of \emph{order} $N$ if the polynomial matrix $\Phi$ inducing the QDF is of order $N$. 

Next, we discuss properties of QDFs. Let $\Phi \in \R_s^{q \times q}[\zeta,\eta]$ and consider the QDF $Q_\Phi$. 
Its derivative, denoted by $\frac{d}{dt}Q_\Phi$, is also a QDF. That is, $\frac{d}{dt}Q_\Phi = Q_{\dot{\Phi}}$,
where \[{\dot{\Phi}(\zeta,\eta)=(\zeta+\eta)\Phi(\zeta,\eta)}.\]

Now we define the nonnegativity and positivity of QDFs. 
\begin{definition}
	Consider $\Phi \in \R_s^{q \times q}[\zeta, \eta]$. We call the QDF $Q_\Phi$ \emph{nonnegative}, denoted by $Q_\Phi \geq 0$, if ${Q_\Phi(w) \geq 0}$ for all $w\in \mathcal{C}^\infty(\R,\R^q)$, that is, ${\left(Q_\Phi(w)\right)(t) \geq 0}$ for all ${w\in \mathcal{C}^\infty(\R,\R^q)}$ and all $t\in \R$. Moreover, we call $Q_\Phi$ \emph{positive}, denoted by $Q_\Phi>0$, if ${Q_\Phi \geq 0}$ and if the only ${w\in \mathcal{C}^\infty (\R,\R^q)}$ for which ${Q_\Phi(w)=0}$, that is, ${\left(Q_{\Phi}(w)\right)(t)=0}$ for all $t\in \R$, is $w=0$. 
\end{definition}
These notions induce nonpositivity and negativity. Namely, we call the QDF $Q_\Phi$ \emph{nonpositive}, denoted by $Q_\Phi \leq 0$, if $Q_{-\Phi} \geq 0$ and \emph{negative}, denoted by $Q_\Phi <0$, if $Q_{-\Phi} >0$.

Moreover, we define these notions on a behavior $\B \in \mathcal{L}^q$.
\begin{definition}
	Consider $\Phi \in \R_s^{q \times q}[\zeta, \eta]$ and $\mathcal{B}\in \mathcal{L}^q$. We call the QDF $Q_\Phi$ \emph{zero} on $\mathcal{B}$, denoted by $Q_\Phi \stackrel{\mathcal{B}}{=}0$, if \\$Q_\Phi(w)=0$ for all $w\in \mathcal{B}$. Moreover, we call it \emph{nonnegative} on $\B$, denoted by $Q_\Phi \stackrel{\mathcal{B}}{\geq} 0$, if $Q_\Phi(w) \geq 0$ for all $w\in \B$, and \emph{positive} on $\B$, denoted by $Q_\Phi\stackrel{\mathcal{B}}{>}0$, if $Q_\Phi \stackrel{\mathcal{B}}{\geq} 0$ and if the only $w\in\B$ for which $Q_\Phi(w)=0$ is $w=0$.
\end{definition}
Nonpositivity and negativity on $\B$ are defined analogously, and are denoted by $Q_\Phi \stackrel{\B}{\leq}0$ and $Q_\Phi \stackrel{\B}{<}0$, respectively.
\subsection{Stability}
In this section, we discuss stability in the context of behaviors. To do so, we first recall the definition of autonomous behaviors.

\begin{definition}[\hspace{-0.01ex}\cite{willems_quadratic_1998}] The behavior $\B\in \mathcal{L}^q$ is \emph{autonomous} if for any $w_1,w_2 \in \mathcal{B}$ such that $w_1(t)=w_2(t)$ for $t<0$ we have that $w_1(t)=w_2(t)$ for all $t \in \R$.
\end{definition} 

\begin{definition}[\hspace{-0.01ex}\cite{willems_quadratic_1998}] Consider an autonomous behavior ${\B \in \mathcal{L}^q}$. We call $\B$ \emph{stable} if every $w\in \B$ is bounded on the half-line $[0, \infty)$ and \emph{asymptotically stable} if ${\lim_{t \to \infty}w(t)=0}$ for all $w\in \mathcal{B}$. 
\end{definition}

Lastly, we recall the following conditions for (asymptotic) stability in terms of Lyapunov functions described by QDFs.
\begin{theorem}[\hspace{-0.01ex}\cite{willems_quadratic_1998}]
	Let ${\B \in \mathcal{L}^q}$ be autonomous.
	\begin{enumerate}[label=\roman*)]
		\item  $\B$ is stable if and only if there exists $\Psi \in \R_s^{q \times q}[\zeta, \eta]$ such that $Q_\Psi \stackrel{\mathcal{B}}{>} 0$ and $\frac{d}{dt}Q_\Psi \stackrel{\mathcal{B}}{\leq} 0$.
		\item $\B$ is asymptotically stable if and only if there exists $\Psi \in \R_s^{q \times q}[\zeta, \eta]$ such that $Q_\Psi \stackrel{\mathcal{B}}{\geq} 0$ and $\frac{d}{dt}Q_\Psi \stackrel{\mathcal{B}}{<} 0$.
	\end{enumerate}
	\label{t: lyap stab}
\end{theorem}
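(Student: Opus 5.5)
The plan is to reduce both statements to classical Lyapunov theory for a finite-dimensional linear ODE, using a state map for the autonomous behavior. Since $\B$ is autonomous, it admits a kernel representation $R(\frac{d}{dt})w=0$ with $R\in\R^{q\times q}[\xi]$ square and $\det R\neq 0$. Hence $\B$ is a finite-dimensional space (of dimension $n=\deg\det R$) of exponential-polynomial trajectories. I would fix a state map $x=X(\frac{d}{dt})w$, for instance built from $w$ and its derivatives up to a sufficient order, such that $\dot x=Ax$, $w=Cx$, and $x(0)$ ranges over all of $\R^n$ and determines $w\in\B$ uniquely. I would then use the standard fact that every QDF restricted to $\B$ is a quadratic form in the state: reducing $\Psi(\zeta,\eta)$ modulo $R$ gives a symmetric $P$ with $Q_\Psi(w)=x^\top P x$ on $\B$. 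The same reduction applied to $\dot\Psi$ gives $\frac{d}{dt}Q_\Psi(w)=x^\top (A^\top P+PA)x$. Conversely, every $x^\top Px$ is a QDF in $w$ via $x=X(\frac{d}{dt})w$.

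Next I would translate the conditions "on $\B$". Nonnegativity on $\B$ means $P\ge 0$, since $x(t)$ sweeps all of $\R^n$. Positivity on $\B$ is subtler, because it is not pointwise. It means $P\ge0$ and that $Q_\Psi(w)\equiv 0$ forces $w=0$. Since $x(t)^\top P x(t)\equiv0$ is equivalent to $P^{1/2}e^{At}x(0)\equiv0$, this is exactly observability of the pair $(P^{1/2},A)$. Similarly, $\frac{d}{dt}Q_\Psi\stackrel{\B}{<}0$ means $N:=-(A^\top P+PA)\ge0$ with $(N^{1/2},A)$ observable.

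\emph{Sufficiency.} For i), $V(t)=x(t)^\top Px(t)$ is nonincreasing, so $|P^{1/2}x(s)|^2\le V(0)$ for all $s\ge0$. With the observability Gramian $W_T=\int_0^T e^{A^\top s}Pe^{As}\,ds>0$, this gives
\[
x(t)^\top W_T x(t)=\int_t^{t+T}x(s)^\top Px(s)\,ds\le T\,V(0),
\]
so $x$, and hence $w=Cx$, is bounded on $[0,\infty)$. For ii), integrating $\dot V=-x^\top Nx$ together with $V\ge0$ gives $\int_0^\infty x^\top Nx\,ds\le V(0)<\infty$. The same Gramian argument with $N$ in place of $P$ then yields $x(t)^\top W^N_T x(t)\to0$, so $x(t)\to0$ and $w(t)\to0$. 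Autonomy also implies that $\B$ is closed under shifts, which justifies applying the argument from any initial time.

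\emph{Necessity.} For ii), if $\B$ is asymptotically stable then $A$ is Hurwitz, because every mode of $A$ appears in some trajectory of $\B$ via the state construction. I would take $P>0$ solving $A^\top P+PA=-I$ and let $\Psi$ be the QDF inducing $x^\top Px$. For i), stability forces all eigenvalues of $A$ to satisfy $\operatorname{Re}\lambda\le0$, with the imaginary-axis eigenvalues semisimple. After a real similarity splitting $A$ into a Hurwitz block and a block similar to a skew-symmetric matrix, I would pick $P=\mathrm{diag}(P_1,I)$ in those coordinates. Here $P_1$ solves the Lyapunov equation for the Hurwitz block, so $P>0$ and $A^\top P+PA\le0$.

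The main obstacle I expect is the bookkeeping in the first step. One must construct a state map for which QDFs on $\B$ correspond exactly to quadratic forms in $x$ (the reduction of $\Psi$ modulo $R$), and which also makes $x(0)$ range over all of $\R^n$ so that all modes of $A$ are visible in $\B$. I would rely on the minimal state construction from the QDF framework of \cite{willems_quadratic_1998} rather than redo it by hand.
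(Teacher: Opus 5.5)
The paper does not prove this theorem. It quotes it from \cite{willems_quadratic_1998}, which develops it in the two-variable polynomial calculus, and uses only the sufficiency directions, in the proof of Theorem~\ref{t: main diss inter thm}. Your proposal is a correct, self-contained proof by a different route: you pass to an explicit realization $\dot x=Ax$, $w=Cx$ on the finite-dimensional behavior.

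The key point, which you handle correctly, is that $Q_\Psi\stackrel{\mathcal{B}}{>}0$ and $\frac{d}{dt}Q_\Psi\stackrel{\mathcal{B}}{<}0$ are not pointwise definiteness conditions. They are equivalent to $P\geq 0$ with $(P^{1/2},A)$ observable, and to $N\geq 0$ with $(N^{1/2},A)$ observable, respectively. The finite-horizon Gramian estimates then give boundedness and convergence, respectively. The converses reduce to the classical facts that a Hurwitz $A$ admits $P>0$ with $A^\top P+PA=-I$, and that a Lyapunov-stable $A$ admits $P>0$ with $A^\top P+PA\leq 0$. Your route makes everything after the realization textbook Lyapunov theory. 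The cost is the realization bookkeeping, which the polynomial framework avoids by working directly with $R$ and $\Psi$.

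The step you flag as the main obstacle is elementary here and needs no general state-map theory:
\begin{enumerate}[label=\roman*)]
\item \emph{Construction.} Since $\mathcal{B}$ is finite-dimensional and $\frac{d}{dt}\mathcal{B}\subseteq\mathcal{B}$, take any linear isomorphism $\varphi:\mathcal{B}\to\R^n$ and set $x(t):=\varphi\big(w(\cdot+t)\big)$. Then $\dot x=Ax$, where $A$ is the matrix of $\frac{d}{dt}$ on $\mathcal{B}$, and $w^{(k)}=CA^kx$.
\item \emph{The matrix $P$.} It follows that $Q_\Psi(w)=x^\top Px$ with $P=\sum_{k,l}(CA^k)^\top\Psi_{kl}\,CA^l$. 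No reduction modulo $R$ is needed.
\item \emph{Observability.} The pair $(C,A)$ is automatically observable: if $Ce^{At}x_0\equiv 0$, then $\varphi^{-1}(x_0)\equiv 0$. This is exactly what your claim that every mode of $A$ is visible in $\mathcal{B}$ requires, via the PBH test.
\item \emph{$\varphi$ is a differential operator.} The kernels of $w\mapsto(w(0),\dots,w^{(L)}(0))$ form a decreasing chain of subspaces of $\mathcal{B}$. Their intersection is trivial because exponential polynomials are analytic, so the map is injective for large $L$. This is what makes $x^\top Px$ a QDF in $w$ in both necessity parts.
\end{enumerate}

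A minor slip: shift invariance holds for every LTI behavior, not because of autonomy. You do not need it anyway, since $x(t)=e^{At}x(0)$ is explicit.
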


\subsection{Dissipativity}
In what follows, we define and discuss dissipativity in the behavioral context. Let $\B \in \mathcal{L}^q$, $\Phi \in \R_s^{q \times q}[\zeta,\eta]$, and consider the associated QDF $Q_\Phi$, which we call the \emph{supply rate}. Let $N$ be the order of $Q_\Phi$. If $N=0$, we call the supply rate $Q_\Phi$ \emph{static}; otherwise, we call it \emph{dynamic}.
\begin{definition}
	We call $\B$ \emph{dissipative} with respect to $Q_\Phi$ if
	\begin{equation*}
		\int_{-\infty}^\infty Q_\Phi(w)dt \geq 0 
	\end{equation*}
	for all $w\in \B \cap \mathcal{D}(\R,\R^q)$. We call $\B$ \emph{half-line dissipative} with respect to $Q_\Phi$ if
	\vspace{-0.5ex}
	\begin{equation*}
		\int_{-\infty}^0 Q_\Phi(w)dt \geq 0 
	\end{equation*}
	for all $w\in \B \cap \mathcal{D}(\R,\R^q)$. Moreover, if these inequalities hold strictly for all nonzero $w \in \B \cap \D(\R,\R^q)$, we call $\B$ \emph{strictly dissipative} with respect to $Q_\Phi$ and \emph{strictly half-line dissipative} with respect to $Q_\Phi$, respectively. 
\end{definition}

Now we define related concepts of storage functions and dissipation inequalities.
\begin{definition}
	Let $\B \in \mathcal{L}^q$ and $\Phi \in \R_s^{q \times q}[\zeta,\eta]$. We call the QDF $Q_\Psi$ induced by $\Psi \in \R_s^{q \times q}[\zeta,\eta]$ a \emph{storage function for $(\B, Q_\Phi)$} if it satisfies the \emph{dissipation inequality}
	\vspace{-0.5ex}
	\begin{equation}
		\frac{d}{dt}Q_\Psi \stackrel{\mathcal{B}}{\leq} Q_\Phi.
		\label{diss ineq}
	\end{equation}
	Moreover, if \eqref{diss ineq} holds strictly, we say $Q_{\Psi}$ satisfies the dissipation inequality strictly.
\end{definition}
\begin{remark}
Note that the dissipation inequality can be seen as finding a QDF $Q_\Psi$ such that the QDF $Q_\Phi -\frac{d}{dt}Q_\Psi$ is nonnegative on $\B$. This can be verified by solving an LMI for a coefficient matrix of $Q_\Psi$; for example, see \cite[Algorithm~8]{belur_algorithmic_2002}. 
\end{remark}

For controllable systems, dissipativity is equivalent to the existence of storage functions. 
\begin{theorem}[\hspace{-0.01ex}{\cite{willems_quadratic_1998, trentelman_every_1997}}]
	Let $\B \in \mathcal{L}^q$ be controllable and consider ${\Phi\in \R_s^{q \times q}[\zeta, \eta]}$. 
	The following statements hold.
	\begin{enumerate}[label=\roman*)]
		\item $\B$ is dissipative with respect to $Q_\Phi$ if and only if there exists a storage function $Q_\Psi$ for $(\B, Q_\Phi)$. \label{first stat of thm diss char}
		\item  $\B$ is half-line dissipative with respect to $Q_\Phi$ if and only if there exists a storage function $Q_\Psi$ for $(\B, Q_\Phi)$ such that $Q_\Psi \stackrel{\mathcal{B}}{\geq }0$. \label{second stat of thm diss char}
	\end{enumerate}
	\label{t: diss char}
\end{theorem}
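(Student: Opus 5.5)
The plan is to reduce everything to a free (unconstrained) latent variable via an image representation, and then to construct storage functions by polynomial spectral factorization. Since $\B$ is controllable, it admits an image representation $w=M\left(\frac{d}{dt}\right)l$, and without loss of generality we take it \emph{observable}: $l$ is determined by $w$, i.e.\ there is $L\in\R^{r\times q}[\xi]$ with $L(\xi)M(\xi)=I$. For such a representation, $w\in\B\cap\D(\R,\R^q)$ if and only if $w=M\left(\frac{d}{dt}\right)l$ with $l\in\D(\R,\R^r)$. We then define the pulled-back supply rate
\[
\Phi'(\zeta,\eta):=M(\zeta)^\top\Phi(\zeta,\eta)M(\eta),
\]
so that $Q_\Phi(w)=Q_{\Phi'}(l)$. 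Any QDF $Q_{\Psi'}$ in $l$ can be transported back to $w$ via $\Psi(\zeta,\eta):=L(\zeta)^\top\Psi'(\zeta,\eta)L(\eta)$, since $l=L\left(\frac{d}{dt}\right)w$. Thus both statements reduce to the case $\B=\mathcal{C}^\infty(\R,\R^r)$ with supply rate $Q_{\Phi'}$.

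\emph{Statement i).} The easy direction ("if") goes as follows. Integrate $\frac{d}{dt}Q_\Psi(w)\le Q_\Phi(w)$ over $\R$ for compactly supported $w\in\B$. Since $Q_\Psi(w)$ also has compact support, its integral of the derivative vanishes, and we get $\int Q_\Phi(w)\,dt\ge0$.

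For the converse, we use Parseval's theorem. It shows that $\int Q_{\Phi'}(l)\,dt\ge0$ for all $l\in\D$ is equivalent to
\[
\partial\Phi'(i\omega):=\Phi'(-i\omega,i\omega)\ge0 \quad \text{for all } \omega\in\R.
\]
By polynomial spectral factorization there is then $F\in\R^{\bullet\times r}[\xi]$ with $\partial\Phi'(\xi)=F(-\xi)^\top F(\xi)$. The two-variable polynomial $\Phi'(\zeta,\eta)-F(\zeta)^\top F(\eta)$ vanishes on $\zeta=-\eta$, so it is divisible by $\zeta+\eta$. Setting
\[
\Psi'(\zeta,\eta):=\frac{\Phi'(\zeta,\eta)-F(\zeta)^\top F(\eta)}{\zeta+\eta}
\]
gives a symmetric two-variable polynomial with
\[
\frac{d}{dt}Q_{\Psi'}(l)=Q_{\Phi'}(l)-\left|F\left(\frac{d}{dt}\right)l\right|^2\le Q_{\Phi'}(l).
\]
Transporting $\Psi'$ back to $w$ as described above yields the storage function for $(\B,Q_\Phi)$.

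\emph{Statement ii).} The "if" direction is again immediate. For compactly supported $w\in\B$, integrating the dissipation inequality over $(-\infty,0]$ gives
\[
\int_{-\infty}^0Q_\Phi(w)\,dt\ \ge\ Q_\Psi(w)(0)\ \ge\ 0.
\]
For the converse, half-line dissipativity implies dissipativity, so spectral factors exist. Among the storage functions, we select the one obtained from the \emph{anti-Hurwitz} spectral factor $F_-$, i.e.\ $\det F_-$ (in the square, full-rank case) has its roots in the closed right half-plane. The plan is to show that the resulting $Q_{\Psi_-}$ equals the \emph{required supply}
\[
Q_{\Psi_-}(l)(0)=\inf\left\{\int_{-\infty}^0Q_{\Phi'}(l')\,dt \;\middle|\; l'\in\D,\ l'\text{ agrees with } l \text{ near } 0\right\}.
\]
Once this identification holds, half-line dissipativity gives directly that the infimum is nonnegative, and time-invariance extends this to every $t$; hence $Q_{\Psi_-}\stackrel{\B}{\ge}0$.

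The identification goes as follows. The dissipation inequality gives the bound $\ge$ for any such $l'$. For the bound $\le$, one takes $l'$ on $(-\infty,0]$ to be (an approximation of) a solution of $F_-\left(\frac{d}{dt}\right)l'=0$ that matches the required initial derivatives at $0$. Because $F_-$ is anti-Hurwitz, such solutions decay as $t\to-\infty$, and they can be cut off to compact support at arbitrarily small cost.

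\emph{Main obstacle.} I expect the hardest step to be this last one, namely the identification of the required supply with the QDF $Q_{\Psi_-}$. There are two difficulties. First, $\det\partial\Phi'$ may have zeros on the imaginary axis, in which case the solutions above only remain bounded rather than decay. Second, the matching of initial conditions requires an approximation argument and care in the non-square or rank-deficient case. I would first treat the case without imaginary-axis zeros, and then handle the general case by a perturbation $\Phi'+\varepsilon I$, passing to the limit $\varepsilon\to0$ using monotonicity and boundedness of the extremal storage functions.
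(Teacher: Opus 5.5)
Your argument is correct. Its first step is exactly the paper's: pass to the latent variable of an image representation and work with $\Phi'(\zeta,\eta)=M^\top(\zeta)\Phi(\zeta,\eta)M(\eta)$, which is the paper's $\hat{\Phi}$.

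From there the paper stops and cites \cite[Theorem~4.3]{trentelman_every_1997} for i) and \cite[Theorem~6.3]{willems_quadratic_1998} for ii). You instead reconstruct those results: Parseval plus polynomial spectral factorization for i), and for ii) the required supply realized by the anti-Hurwitz factor, combined with an $\varepsilon I$-perturbation.

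What your version buys is self-containment. Your choice of an observable image representation with $L(\xi)M(\xi)=I$ also gives an explicit transport $\Psi(\zeta,\eta)=L^\top(\zeta)\Psi'(\zeta,\eta)L(\eta)$ of the latent storage function back to a QDF in $w$, a point the paper's one-line reduction leaves implicit. The paper's version buys brevity and hands the delicate parts of ii) (imaginary-axis zeros, singular $\partial\Phi'$) to the literature.

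Since you only sketch those parts, here is what makes them go through.
\begin{itemize}
\item \emph{Gluing in the nonsingular case.} Right-multiply $F_-$ by a unimodular $U$ so that $F_-U$ is column reduced with column degrees $k_j$, $\sum_j k_j=\deg\det F_-$. The derivatives of orders $0,\dots,k_j-1$ at $0$ of the $j$-th component of $U^{-1}(\frac{d}{dt})l$ can then be matched by a solution of $(F_-U)(\frac{d}{dt})\tilde{h}=0$ that decays at $-\infty$. A cut-off transition on $[-\delta,0]$ then adds only $O(\delta)$ to $\int_{-\infty}^0|F_-(\frac{d}{dt})l'|^2dt$.
\item \emph{The limit $\varepsilon\to0$.} The identity $|F_\varepsilon(i\omega)e_j|^2=(\partial\Phi')_{jj}(i\omega)+\varepsilon$ bounds the column degrees of $F_\varepsilon$ uniformly in $\varepsilon$. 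Hence the coefficient matrices $\tilde{\Psi}_\varepsilon$ all live in one fixed space. Being required supplies, they are positive semidefinite and monotone in $\varepsilon$, so they converge to the coefficient matrix of a nonnegative storage function for $Q_{\Phi'}$.
\end{itemize}

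A small correction: with repeated imaginary-axis zeros, the homogeneous solutions need not stay bounded as $t\to-\infty$ (e.g.\ $F_-(\xi)=\xi^2$ gives $a+bt$). Your perturbation sidesteps this anyway.
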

Statement~\ref{first stat of thm diss char} was shown in \cite[Theorem~4.3]{trentelman_every_1997}. Moreover, \ref{second stat of thm diss char} follows from \cite[Theorem~6.3]{willems_quadratic_1998} applied to ${\hat{\Phi}(\zeta,\eta):= M^\top (\zeta)\Phi(\zeta,\eta)M(\eta)}$, where $M \in \R^{q \times r}[\xi]$ is such that $\B=\im M\left(\frac{d}{dt}\right)$.

\begin{remark}
	Unlike in the original definition of dissipativity in \cite{willems_dissipative_1972}, we do not define dissipativity in terms of the existence of nonnegative storage functions. This is in line with other dissipativity studies within the behavioral approach  \cite{trentelman_every_1997, willems_quadratic_1998}. We note that dissipativity with a possibly indefinite storage function is sometimes referred to as \emph{cyclo-dissipativity}, see e.g. \cite[Section~3.1]{van_der_schaft_dissipative_2017}.
\end{remark}

Let $\B \in \mathcal{L}^q$ and $\Phi \in \mathbb{R}_s^{q \times q}[\zeta,\eta]$. Suppose that $Q_\Psi$ is a storage function for $(\B,Q_\Phi)$. For various choices of $\Phi$, the dissipation inequality \eqref{diss ineq} captures many dissipativity properties in the literature. Namely, we say that $\B$ has \linebreak \emph{$L_2$-gain less than or equal to $\gamma$} if \eqref{diss ineq} holds with \begin{equation*}\Phi(\zeta,\eta)=\begin{bmatrix}
			\gamma^2 I & 0 \\0 & -I
		\end{bmatrix}, \quad \gamma>0 	\vspace{-1ex}\end{equation*} 
	and $Q_{\Psi}\stackrel{\B}{\geq}0$, $\B$ is \emph{passive} if \eqref{diss ineq} holds with
\begin{equation}\Phi(\zeta,\eta)=\frac{1}{2}\begin{bmatrix}
			0 & I \\ I & 0
		\end{bmatrix} \label{passive sr} \vspace{-1ex}\end{equation} and $Q_\Psi \stackrel{\B}{\geq}0$, and $\B$ is \emph{delta dissipative} with supply rate $Q_\Phi$ if \eqref{diss ineq} holds with $\Phi(\zeta,\eta)=\zeta \eta X$ with $X \in \R^{q \times q}$ and $Q_\Psi \stackrel{\B}{\geq}0$. Lastly, $\B$ is \emph{negative imaginary} if \eqref{diss ineq} holds with \begin{equation}\Phi(\zeta,\eta)=\frac{1}{2}\begin{bmatrix}
			0 & \eta I \\
			\zeta I & 0
		\end{bmatrix}. \label{NI sr} 
\end{equation}
\section{Problem statement}
\label{sec:problem statement}
In a behavioral setting, we do not a priori distinguish between inputs and outputs and instead view individual sub-systems in terms of the behavior of their shared variables. Hence, an interconnection consists of requiring that these variables satisfy the laws of all sub-systems. For more details and examples of behavioral interconnections, we refer the reader to \cite{willems_interconnections_1997}.

We focus on interconnections of two LTI systems that are dissipative with respect to some (dynamic) supply rates and aim to find conditions under which this interconnection is stable. 

Consider $\Sigma_1=(\R,\R^q, \mathcal{B}_1)$ and ${\Sigma_2=(\R,\R^q, \mathcal{B}_2)}$, and let $\Phi_1,\Phi_2\in \R^{q \times q}_s[\zeta,\eta]$ induce the supply rates $Q_{\Phi_1},Q_{\Phi_2}$. Suppose that $Q_{\Psi_i}$ is a storage function for $(\B_i,Q_{\Phi_i})$, for $i=1,2$.

We define an interconnection of $\Sigma_1$ and $\Sigma_2$, depicted in Fig. \ref{clc inter},  as the system ${\Sigma_1 \land \Sigma_2:= (\R,\R^q, \B)}$, where ${\B:=\B_1 \cap \B_2}$. That is, a system whose trajectories are compatible with the laws of both $\Sigma_1$ and $\Sigma_2$.  
\vspace{-2ex}
\begin{figure}[thpb]
	\centering
	\begin{tikzpicture}[auto,node distance = 4mm and 22mm]
		\node  [block, minimum width=16mm, minimum height=10mm] (system1) {$\Sigma_1$};
		\node  [block,  minimum width=16mm, minimum height=10mm, right = of system1] (system2) {$\Sigma_2$};
		\coordinate[above right= of system1.east] (w);
		\coordinate[right= of system1.east] (w2);
		\coordinate[below right= of system1.east] (w3);
		\draw([yshift=8pt]system1.east) -- ++(2.21,0);
		\draw ([yshift=10pt]system1.east) -- ++(2.21,0);
		\node at ([yshift=7.5pt, xshift=-31.5pt]w) {$w$};
		\draw([yshift=-8pt]system1.east) -- ++(2.21,0);
		\draw ([yshift=-10pt]system1.east) -- ++(2.21,0);
		\draw ([yshift=0pt]system1.east)  (1.91,0.1)node {$\vdots$}(w2);
	\end{tikzpicture} 
	\caption{Full interconnection of $\Sigma_1$ and $\Sigma_2$.}
	\label{clc inter}
\end{figure}
\vspace{-1ex}
\begin{problem}
	Find conditions on the behaviors $\B_1,\B_2$, the supply rates $Q_{\Phi_1}$, $Q_{\Phi_2}$, and the storage functions $Q_{\Psi_1}$, $Q_{\Psi_2}$ such that the interconnected behavior $\B$ is (asymptotically) stable.
\end{problem}

The interconnected behavior $\B$ can capture the trajectories of many well-known (feedback) interconnections. In the following, we describe a few.
\begin{example}
	Consider  $\bar{\B}_1\in \mathcal{L}^{m_1+p_1}$ and $\bar{\B}_2\in \mathcal{L}^{m_2+p_2}$. For $i=1,2$, partition ${w_i\in \bar{\B}_i}$ as ${w_i=(u_i,y_i)}$, where ${u_i\in \left(\R^{m_i}\right)^\R}$ and ${y_i\in \left(\R^{p_i}\right)^\R}$. Consider the interconnection obtained by setting
	\vspace{-1ex} \begin{equation}\begin{bmatrix}
			u_1 \\ u_2
		\end{bmatrix}=K\begin{bmatrix}
			y_1 \\ y_2
		\end{bmatrix},\label{K law}\end{equation} where ${K\in \R^{(m_1+m_2)\times(p_1+p_2)}}$. We call such an interconnection a \emph{$K$-interconnection}.
	Let $q:=p_1+p_2$. A natural choice for a behavior of the interconnection is the following:
	\[\B := \set{(y_1,y_2)\in \left(\R^{q}\right)^\R}{\begin{gathered}\exists (u_1,u_2) 
			\text{ s.t. } (u_1,y_1) \in \bar{\B}_1,\\ (u_2,y_2) \in \bar{\B}_2, \text{ and \eqref{K law} holds }\end{gathered}}.\]
	This behavior is equal to the intersection of two auxiliary behaviors. Indeed, if we define
	\begin{align}
		K_1&:=\begin{bmatrix}
			I_{m_1} & 0_{m_1 \times m_2} & 0 & 0 \\ 0 & 0 & I_{p_1} & 0_{p_1\times p_2}
		\end{bmatrix}\begin{bmatrix}
			K \\ I
		\end{bmatrix}, \label{K1} \\ K_2&:=\begin{bmatrix}
			0_{m_2 \times m_1} & I_{m_2} & 0 & 0 \\ 0 & 0 & 0_{p_2 \times p_1} & I_{p_2}
		\end{bmatrix}\begin{bmatrix}
			K \\ I
		\end{bmatrix},\label{K2}
	\end{align}
	\vspace{-2ex}\\
and
	\begin{equation}
		\B_1:=\set{w}{K_1w\in \bar{\B}_1}, \;\;
		\B_2:=\set{w}{K_2w\in \bar{\B}_2},
		\label{arbitrary sr barB}
	\end{equation} then $\B=\B_1\cap\B_2$.
	
	We note that $K$-interconnections include positive and negative feedback
	interconnections, depicted in Fig. \ref{fig: feedback}. Concretely, let $m_1=p_1$, $m_2=p_2$, and
	\begin{equation}
		K_+ = \begin{bmatrix}
			0 & I_{p_2} \\ I_{p_1} & 0
		\end{bmatrix}
		\quad \text{and} \quad
		K_- = \begin{bmatrix}
			0 & -I_{p_2} \\ I_{p_1} & 0
		\end{bmatrix}.
		\label{feedback Ks}
	\end{equation}
	For $K=K_+$,
	$\B$ consists precisely of all trajectories $(y_1,y_2)$ such that $(y_2,y_1) \in \bar{\B}_1$ and $(y_1,y_2) \in \bar{\B}_2$. As such, $\B$ captures the positive feedback interconnection of $\bar{\B}_1$ and $\bar{\B}_2$. 
	Moreover, for $K=K_-$, 
	$\B$ consists of all trajectories $(y_1,y_2)$ such that $(-y_2,y_1) \in \bar{\B}_1$ and $(y_1,y_2) \in \bar{\B}_2$. In this case, $\B$ captures the negative feedback interconnection of $\bar{\B}_1$ and $\bar{\B}_2$.
	\vspace{-3ex}
	\begin{figure}[H]
		\centering
		\begin{subfigure}[b]{0.5\linewidth}
			\centering
			\begin{tikzpicture}[auto,node distance = 4mm and 10mm,baseline=(current bounding box.center)]
				\node  [block, minimum width=10mm, minimum height=6mm] (system1) {$\Sigma_1$};
				\node  [block, minimum width=10mm, minimum height=6mm, below = of system1] (system2) {$\Sigma_2$};
				\coordinate[left = of system1.west] (u1);
				\coordinate[right= of system1.east] (y1);
				\coordinate[left = of system2.west] (u2);
				\coordinate[right= of system2.east] (y2);
				
				\draw[-stealth] (u1) -- node[above] {$u_1$} (system1);
				\draw (system1) --node[above] {$y_1$} (y1);
				\draw[-stealth] (y1) |- node[near end, above]{$u_2$}(system2);
				\draw (system2) -|  node[near start, above]{$y_2$}(u1);
			\end{tikzpicture}
			\caption{Positive feedback.}
			\label{fig: pos feedback}
		\end{subfigure}%
		\begin{subfigure}[b]{0.5\linewidth}
			\centering
			\begin{tikzpicture}[auto,node distance = 4mm and 10mm,baseline=(current bounding box.center)]
				\node  [block, minimum width=10mm, minimum height=6mm] (system1) {$\Sigma_1$};
				\node  [block, minimum width=10mm, minimum height=6mm, below = of system1] (system2) {$\Sigma_2$};
				\node[sum, left = of system1.west] (sum) {};
				\coordinate[left = of system1.west] (u1);
				\coordinate[right= of system1.east] (y1);
				\coordinate[left = of system2.west] (u2);
				\coordinate[right= of system2.east] (y2);
				
				\draw[-stealth] (sum) -- node[above] {$u_1$} (system1);
				\draw (system1) --node[above] {$y_1$} (y1);
				\draw[-stealth] (y1) |- node[near end, above]{$u_2$}(system2);
				\draw [-stealth](system2) -|  node[near start, above]{$y_2$}(sum);
				
				\node[below left= 0mm and 2mm of sum.east] {$-$};
			\end{tikzpicture}
			\caption{Negative feedback.}
			\label{fig: neg feedback}
		\end{subfigure}
		\caption{Feedback interconnections.}
		\label{fig: feedback}
	\end{figure}
	\vspace{-3ex}
	\label{ex K-inter}
\end{example}

\section{A basic behavioral interconnection theorem} 
\label{sec:first theorem}
The following basic theorem provides conditions under which the interconnection of two dissipative behaviors is (asymptotically) stable.
\begin{theorem}
	Let $\B_1,\B_2 \in \mathcal{L}^q$ and ${\Phi_1, \Phi_2 \in \R_s^{q \times q}[\zeta,\eta]}$. Assume that exists a storage function $Q_{\Psi_i}$ for $(\B_i, Q_{\Phi_i})$ such that $Q_{\Psi_i}\stackrel{\B_i}{\geq}0$, for $i=1,2$. Define $\B := \B_1 \cap \B_2$ and assume that there exist scalars $\alpha_1>0$ and $\alpha_2 \geq0$ such that
	\vspace{-0.5ex}
	\begin{equation}
		\alpha_1 Q_{\Phi_1}+\alpha_2 Q_{\Phi_2} \stackrel{\B}{\leq} 0.
		\label{diss inter thm cond}
		\vspace{-1ex}
	\end{equation}Then,
	\begin{enumerate}[label=\roman*)]
		\item $\B$ is stable if $Q_{\Psi_1}\stackrel{\B_1}{>}0$;  \label{first stat first diss inter thm}
		\item $\B$ is asymptotically stable if $Q_{\Psi_1}$ satisfies the dissipation inequality strictly. \label{second stat first diss inter thm}
	\end{enumerate}
	\label{t: first diss inter thm}
\end{theorem}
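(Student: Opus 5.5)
The plan is to use the weighted sum of the storage functions as a Lyapunov function for $\B$. Concretely, set
\[
\Psi := \alpha_1 \Psi_1 + \alpha_2 \Psi_2 \in \R_s^{q\times q}[\zeta,\eta],
\]
and verify the hypotheses of Theorem~\ref{t: lyap stab} on $\B=\B_1\cap\B_2$. Since $\B\subseteq \B_i$, every inequality that holds on $\B_i$ also holds on $\B$.

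\textbf{Nonnegativity and the derivative bound.} From $Q_{\Psi_i}\stackrel{\B_i}{\geq}0$ and $\alpha_i\geq 0$ we get $Q_\Psi \stackrel{\B}{\geq} 0$. The dissipation inequalities \eqref{diss ineq}, scaled by $\alpha_i \geq 0$ and added, give
\[
\frac{d}{dt}Q_\Psi \stackrel{\B}{\leq} \alpha_1 Q_{\Phi_1} + \alpha_2 Q_{\Phi_2}.
\]
Combined with \eqref{diss inter thm cond}, this yields $\frac{d}{dt}Q_\Psi \stackrel{\B}{\leq} 0$.

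\textbf{Case~\ref{first stat first diss inter thm}.} We must upgrade nonnegativity to $Q_\Psi\stackrel{\B}{>}0$. If $w\in\B$ and $Q_\Psi(w)=0$, then both nonnegative summands vanish. In particular $\alpha_1 Q_{\Psi_1}(w)=0$, and since $\alpha_1>0$ and $w\in\B_1$, positivity of $Q_{\Psi_1}$ on $\B_1$ forces $w=0$.

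\textbf{Case~\ref{second stat first diss inter thm}.} Here strictness means $Q_{\Phi_1}-\frac{d}{dt}Q_{\Psi_1} \stackrel{\B_1}{>}0$. We write
\[
\frac{d}{dt}Q_\Psi = \alpha_1\Big(\frac{d}{dt}Q_{\Psi_1}-Q_{\Phi_1}\Big) + \alpha_2\Big(\frac{d}{dt}Q_{\Psi_2}-Q_{\Phi_2}\Big) + \big(\alpha_1Q_{\Phi_1}+\alpha_2Q_{\Phi_2}\big).
\]
All three terms are nonpositive on $\B$. If $\frac{d}{dt}Q_\Psi(w)=0$ for some $w\in\B$, then each term vanishes. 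The first term vanishing, with $\alpha_1>0$, gives $(Q_{\Phi_1}-\frac{d}{dt}Q_{\Psi_1})(w)=0$, hence $w=0$. Therefore $\frac{d}{dt}Q_\Psi\stackrel{\B}{<}0$.

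\textbf{Autonomy of $\B$ (main obstacle).} Theorem~\ref{t: lyap stab} presupposes that $\B$ is autonomous, which is not given. I would argue by contradiction, using the standard fact that a non-autonomous $\B\in\mathcal{L}^q$ contains a nonzero compactly supported trajectory $w\in\B\cap\D(\R,\R^q)$. This holds because a non-autonomous behavior has a nontrivial controllable part, which is of image form and so contains such trajectories.
\begin{itemize}
\item In case~\ref{first stat first diss inter thm}: $Q_\Psi(w)$ vanishes for $t$ sufficiently negative and is nonincreasing, so $Q_\Psi(w)\leq 0$. Together with $Q_\Psi(w)\geq0$ this gives $Q_\Psi(w)=0$, contradicting $Q_\Psi\stackrel{\B}{>}0$.
\item In case~\ref{second stat first diss inter thm}: $\int_{-\infty}^{\infty}\frac{d}{dt}Q_\Psi(w)\,dt = 0$ because $w$ has compact support. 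The integrand is nonpositive, so it vanishes identically, contradicting $\frac{d}{dt}Q_\Psi\stackrel{\B}{<}0$.
\end{itemize}
Hence $\B$ is autonomous in both cases, and Theorem~\ref{t: lyap stab} yields the two claims. The step needing the most care is this autonomy argument, specifically citing the existence of compactly supported trajectories in non-autonomous behaviors.
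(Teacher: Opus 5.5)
Your proposal is correct and follows the paper's route: the paper obtains this result as the case $\Gamma=0$ of Theorem~\ref{t: main diss inter thm}, whose proof uses exactly the Lyapunov function $\alpha_1Q_{\Psi_1}+\alpha_2Q_{\Psi_2}$ (minus $Q_\Gamma$, which is zero here) together with Theorem~\ref{t: lyap stab}. Your extra step showing that $\B$ is autonomous, via a nonzero compactly supported trajectory in any non-autonomous behavior, checks a hypothesis of Theorem~\ref{t: lyap stab} that the paper's proof uses without verifying, so it completes the paper's argument rather than departing from it.
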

We omit the proof as this theorem is a special case of our more general result, Theorem~\ref{t: main diss inter thm}, as discussed in Example~\ref{ex. prev thm}. Moreover, we note that the conditions of this theorem can be verified using LMIs; see Remark~\ref{main thm remark} for more details.

\subsection{Examples}
Theorem~\ref{t: first diss inter thm} generalizes many stability interconnection results in the literature. In this section, we discuss how application of our theorem to specific $K$-interconnections can recover various known results.

Consider $\bar{B}_1\in \mathcal{L}^{m_1+p_1}, \;\bar{\B}_2\in \mathcal{L}^{m_2+p_2}$ and for ${K\in \R^{(m_1+m_2)\times(p_1+p_2)}}$ their $K$-interconnection \eqref{K law} with the behavior ${\B=\B_1\cap \B_2}$, where $\B_1, \B_2$ are as in \eqref{arbitrary sr barB}. 

Partition $K$ such that
\begin{equation}
	K=\begin{bmatrix}
		K_{11} & K_{12}\\ K_{21} & K_{22}
	\end{bmatrix},
	\label{K partition}
\end{equation}
where $K_{11}\in \R^{m_1\times p_1}$,
$K_{12}\in \R^{m_1\times p_2}$, $K_{21}\in \R^{m_2\times p_1}$, and $K_{22}\in \R^{m_2\times p_2}$. 

\begin{example}[Arbitrary static supply rates]
\label{ex: arbitrary sr}
Assume that $K_{12}$ has full column rank.
For $i=1,2$, let \[{\bar{\Phi}_i(\zeta,\eta)=X^i=\begin{bmatrix}
		X_{11}^i & X_{12}^i \\ X_{21}^i & X_{22}^i
\end{bmatrix}},\]
where $X_{11}^i\in \R^{m_i \times m_i}$, $X_{12}^i\in \R^{m_i \times p_i}$, $X_{21}^i\in \R^{p_i \times m_i}$, and $X_{22}^i\in \R^{p_i \times p_i}$. Suppose that there exists a storage function $Q_{\bar{\Psi}_i}$ for $(\bar{\B}_i,Q_{\bar{\Phi}_i})$, for $i=1,2$, such that $Q_{\bar{\Psi}_1}\stackrel{\bar{\B}_1}{>}0$ and  $Q_{\bar{\Psi}_2}\stackrel{\bar{\B}_2}{\geq}0$. Moreover, in line with \cite{arcak_stability_2016}, assume that there exist $\alpha_1>0$ and $\alpha_2\geq 0$ such that $\Xi(\alpha_1,\alpha_2)\leq0$, where $\Xi(\alpha_1,\alpha_2)$ is defined as
	\begin{equation}
		\begin{bmatrix}
			K \\ I
		\end{bmatrix}^\top \begin{bmatrix}
			\alpha_1X^{11}_1 & 0  &\alpha_1X_1^{12} & 0\\
			0 & \alpha_2 X_2^{11} & 0 & \alpha_2X_2^{12}\\
			\alpha_1X^{21}_1 & 0  &\alpha_1X_1^{22} & 0\\
			0 & \alpha_2 X_2^{21} & 0 & \alpha_2X_2^{22}
		\end{bmatrix}\begin{bmatrix}
			K \\ I
		\end{bmatrix}.
		\label{Xi def}
	\end{equation}
	For $i=1,2$, define
	\begin{equation}
		\Phi_i(\zeta,\eta):=K_i^\top \bar{\Phi}_i(\zeta,\eta)K_i, \;\; \Psi_i(\zeta,\eta):=K_i^\top \bar{\Psi}(\zeta,\eta)K_i,
		\label{bar Phi and bar Psi}
	\end{equation}
	where $K_1, K_2$ are as in \eqref{K1} and \eqref{K2}, respectively.
	Then, by Lemma~\ref{lemma barB dissipativity} parts \ref{lemma stat 1} and \ref{lemma stat 3}, $Q_{\Psi_i}$ is a storage function for $(\B_i,Q_{\Phi_i})$ such that $Q_{\Psi_i}\stackrel{\B_i}{\geq}0$. Moreover, as $K_{12}$ has full column rank, also $K_1$
	has full column rank. Then, by Lemma~\ref{lemma barB dissipativity} part \ref{lemma stat 4}, $Q_{\Psi_1}\stackrel{\B_1}{>}0$.  Moreover, for $\Xi$ as in \eqref{Xi def} and all $w\in \left(\R^q\right)^\R$, we have that 
	\begin{equation*}
		\alpha_1Q_{\Phi_1}(w)+\alpha_2Q_{\Phi_2}(w)= w^\top \Xi(\alpha_1,\alpha_2) w\leq 0.
	\end{equation*}
	Hence, by Theorem~\ref{t: first diss inter thm}, the interconnected behavior $\B$ is stable.
	
	Therefore, Theorem~\ref{t: first diss inter thm} allows us to show a similar result to the generalized interconnection result for networks of dissipative systems with static supply rates in \cite[Proposition~2.1]{arcak_stability_2016}, adapted to our behavioral setting and applied to an interconnection of two LTI systems. 
	
	Unlike in \cite[Proposition~2.1]{arcak_stability_2016}, we do not require that the interconnection is well-posed or that both of the storage functions are positive. However, we add an extra rank assumption on the $K$ matrix. We note that the full column rank assumption on $K_{12}$ is not too restrictive, as common examples of $K$-interconnections like positive/negative feedback (with $K=K_+$ or $K_-$ in \eqref{feedback Ks}) satisfy it.
	
	Lastly, we note that with this extra assumption, one can similarly show that, if in addition, $Q_{\Psi_1}$ satisfies the dissipation inequality strictly, the interconnected behavior $\B$ will be asymptotically stable.
	
	\label{ex arbitrary static sr}
\end{example}
\begin{example}[Small gain theorem]
	Assume that $K=K_+$ as in \eqref{feedback Ks}.
	Let $\gamma_1,\gamma_2>0$ be such that $\gamma_1 \gamma_2 <1$ and, for $i=1,2$, assume that $\bar{\B}_i$ has $L_2$-gain less than or equal to $\gamma_i$. Define \vspace{-0.5ex}
	\[\bar{\Phi}_i(\zeta,\eta):=\begin{bmatrix}
		\gamma_i^2 I & 0 \\ 0 & -I
	\end{bmatrix},
	\vspace{-1ex}\]
	for $i=1,2$.
	Let $Q_{\bar{\Psi}_i}\stackrel{\bar{\B}_i}{\geq}0$ be a storage function associated with $(\bar{\B}_i,Q_{\bar{\Phi}_i})$. 
	
	For $i=1,2$ define $\Phi_i,\Psi_i$ as in \eqref{bar Phi and bar Psi} with $K_i$ as in \eqref{K1},\eqref{K2}.
	Then, by Lemma~\ref{lemma barB dissipativity} parts \ref{lemma stat 1} and \ref{lemma stat 3}, $Q_{\Psi_i}$ is a storage function for $(\B_i, Q_{\Phi_i})$ such that ${Q_{\Psi_i} \stackrel{\B_i}{\geq}0}$. 
	
	Let $\hat{\gamma}_1>\gamma_1$ be such that $\hat{\gamma}_1\gamma_2<1$. Then there exists $\delta<1$ such that $\hat{\gamma}_1\gamma_2\leq \delta$. Let \[\hat{\Phi}_1(\zeta,\eta):=\begin{bmatrix}
		-\delta^2 I & 0 \\0 & \hat{\gamma}_1^2 I\\
	\end{bmatrix}.\]
	We have that
	\[\frac{d}{dt}Q_{\Psi_1} \stackrel{\B_1}{\leq}
	Q_{\Phi_1}< Q_{\hat{\Phi}_1},\]
	as for all $w=(u,y)\in \left(\R^q\right)^\R $,
	\[Q_{\Phi_1}(w)=\gamma_1^2 \|y\|^2 - \|u\|^2\leq \hat{\gamma_1}^2\|y\|^2-\delta^2 \|u\|^2 = Q_{\Phi_1}(w),\]
	and $Q_{\Phi_1}(w) = Q_{\hat{\Phi}_1}(w)$ if and only if $w=0$.
	Hence, $Q_{\Psi_1}$ is a storage function for $(\B_1,Q_{\hat{\Phi}_1})$ such that it satisfies the dissipation inequality strictly. Then, for ${\alpha_1 =1}$, ${\alpha_2 = \hat{\gamma}_1^2}$, and all $w\in \left(\R^q\right)^\R$, we get that
	\begin{gather*}
		\alpha_1 Q_{\hat{\Phi}_1}(w)+\alpha_2Q_{\Phi_2}(w) = 
		w^\top \begin{bmatrix}
			\left( \hat{\gamma}_1^2\gamma_2^2-\delta^2\right) I & 0 \\ 0 & 0
		\end{bmatrix}w \leq 0.
	\end{gather*} 
	Hence, by Theorem~\ref{t: first diss inter thm}, the interconnected behavior $\B$ is asymptotically stable. This recovers a version of the small gain theorem; see, for example, \cite{zhou_essentials_1998}.
\end{example}
\begin{example}[Passivity theorem]
	Assume that $K=K_-$ as in \eqref{feedback Ks} and suppose that $\bar{\B}_1$ and $\bar{\B}_2$ are passive. Let $\bar{\Phi}_i$ be as in \eqref{passive sr}
	for $i=1,2$, and let $Q_{\bar{\Psi}_i}\stackrel{\bar{\B}_i}{\geq}0$ be the associated storage function for $(\bar{\B}_i,Q_{\bar{\Phi}_i})$. Assume that $Q_{\bar{\Psi}_1}\stackrel{\bar{\B}_1}{>}0$.
	
	Notice that $K_1$ as in \eqref{K1} has full column rank. Moreover, for $\alpha_1=\alpha_2=1$, $\Xi(\alpha_1,\alpha_2)$ as in \eqref{Xi def} satisfies ${\Xi(\alpha_1,\alpha_2)=0}$. 
	Hence, this is a special case of Example \ref{ex: arbitrary sr} and, as we have shown, by Theorem~\ref{t: first diss inter thm}, $\B$ is stable. 
	
	This recovers a version of the passivity theorem; see, for example, \cite{desoer_feedback_2009}.
	
\end{example}

\begin{example}[Delta dissipative systems]			
	Assume that $K_{12}$ is column rank and that $\bar{\B}_1$ and $\bar{\B}_2$ are delta dissipative with the supply rates $Q_{\bar{\Phi}_i}$ such that
	\[\bar{\Phi}_i(\zeta,\eta)=\zeta \eta X^i=\zeta \eta \begin{bmatrix}
		X_{11}^i & X_{12}^i \\ X_{21}^i & X_{22}^i
	\end{bmatrix},\]where $X_{11}^i\in \R^{m_i \times m_i}$, $X_{12}^i\in \R^{m_i \times p_i}$, $X_{21}^i\in \R^{p_i \times m_i}$, and $X_{22}^i\in \R^{p_i \times p_i}$, for $i=1,2$.
	Moreover, assume that there exist $\alpha_1>0$ and $\alpha_2\geq 0$ such that $\Xi(\alpha_1,\alpha_2)$, as in \eqref{Xi def}, satisfies ${\Xi(\alpha_1,\alpha_2)\leq0}$.
	
	For $i=1,2$, define $\Phi_i,\Psi_i$ as in \eqref{bar Phi and bar Psi}. Then, using that $K_1$ as in \eqref{K1} has full column rank and by Lemma~\ref{lemma barB dissipativity} parts \ref{lemma stat 1}, \ref{lemma stat 3}, and \ref{lemma stat 4}, $Q_{\Psi_i}$ is a storage function for $(\B_i,Q_{\Phi_i})$ for $i=1,2$, such that $Q_{\Psi_1} \stackrel{\B_1}{>}0$ and $Q_{\Psi_2} \stackrel{\B_2}{\geq}0$. 
	Then, for all $w\in \left(\R^q\right)^\R$ we have that
	\begin{equation*}		\alpha_1Q_{\Phi_1}(w)+\alpha_2Q_{\Phi_2}(w)=\dot{w}^\top \Xi(\alpha_1,\alpha_2) \dot{w}\leq 0. 
	\end{equation*}
	Hence, by Theorem~\ref{t: first diss inter thm}, the interconnected behavior $\B$ is stable. This shows a similar interconnection result for networks of delta dissipative systems to \cite[Corollary~1]{schweidel_compositional_2022}, adapted to our behavioral setting and applied to an interconnection of two systems. 
	
	Just as in Example~\ref{ex arbitrary static sr}, we note that we have slightly different assumptions than in \cite[Corollary~1]{schweidel_compositional_2022}. However, as was previously explained, they are not too restrictive.
	
	Moreover, if in addition, $Q_{\Psi_1}$ satisfies its dissipation inequality strictly, we can use Theorem~\ref{t: first diss inter thm} to conclude that $\B$ is asymptotically stable.
\end{example}
\begin{remark}
	\label{r: another behavioral}
	In \cite{takaba_stability_2005}, the stability of an interconnection of two controllable, dissipative behaviors was studied. Our theorem is also applicable to this case.
	
	Let $\B_1,\B_2 \in \mathcal{L}^q$ be controllable and consider ${\Phi \in \R_s^{q \times q}[\zeta,\eta]}$. Suppose that there exists $\varepsilon>0$ such that
	\begin{equation}
		\int_{-\infty}^0 \left(Q_{\Phi}(w)\right)(\tau) d\tau\geq \epsilon \int_{-\infty}^0 \|w(\tau)\|^2 d\tau,
		\label{other def strict diss}
	\end{equation}
	for all $w \in \B_1 \cap \mathcal{D}(\R, \R^q)$, and that $\B_2$ is half-line dissipative with respect to $Q_{-\Phi}$. In \cite[Theorem~1]{takaba_stability_2005}, the authors showed that under these assumptions, the behavior $\B=\B_1 \cap \B_2$ is \linebreak$L_2$-stable. That is, for all $w\in \B$ we have that
	\begin{equation*}
		\int_0^\infty \|w(t)\|^2 dt <+\infty.
	\end{equation*}
	Let $\Phi_1=\Phi$ and $\Phi_2=-\Phi$. By Theorem~\ref{t: diss char}, there exists a storage function $Q_{\Psi_i}$ for $(\B_i,Q_{\Phi_i})$ such that ${Q_{\Psi_i}\stackrel{\B_i}{\geq}0}$, for $i=1,2$. Moreover, \eqref{other def strict diss} implies that $\B_1$ is strictly half-line dissipative with respect to $Q_{\Phi_1}$ and that $Q_{\Psi_1}$ satisfies the dissipation inequality strictly.
	Thus, by Theorem~\ref{t: first diss inter thm}, $\B$ is asymptotically stable.
\end{remark}

\section{A unifying behavioral interconnection theorem}
\label{sec:main theorem}
We have discussed several examples of interconnection results in the literature that Theorem~\ref{t: first diss inter thm} generalizes. However, in some cases, Theorem~\ref{t: first diss inter thm} is not applicable. One such example is the case of negative imaginary systems \cite{xiong_negative_2010}.

\begin{example}[Negative imaginary systems]
	Consider negative imaginary behaviors $\bar{\B}_1, \bar{\B}_2\in \mathcal{L}^q$. Let $\bar{\Phi}_i$ be as in \eqref{NI sr}
	for $i=1,2$ be the associated supply rate for $\bar{\B}_i$. Consider the positive feedback interconnection of $\bar{\B}_1$ and $\bar{\B}_2$ with the behavior $\B=\B_1\cap \B_2$, where $\B_1, \B_2$ are as in \eqref{arbitrary sr barB} with $K=K_+$ as in \eqref{feedback Ks}. 
	Define $\Phi_i,\Psi_i$ as in \eqref{bar Phi and bar Psi}, for $i=1,2$. Then, by Lemma~\ref{lemma barB dissipativity} part \ref{lemma stat 1}, $Q_{\Psi_i}$ is a storage function for $(\B_i,Q_{\Phi_i})$, for $i=1,2$. Moreover, for all $\alpha_1, \alpha_2\geq 0$
	\begin{gather*}
		\alpha_1\Phi_1(\zeta,\eta)\!+\!\alpha_2\Phi_2(\zeta,\eta)\!=\!\frac{1}{2}\!\begin{bmatrix}
			0 & \hspace{-2ex}(\alpha_2\zeta +\alpha_1\eta) I\\
			\hspace{-0.25ex}(\alpha_1\zeta+\alpha_2\eta)I & 0
		\end{bmatrix},
	\end{gather*}
	and hence, $Q_\Phi:=\alpha_1Q_{\Phi_1}+\alpha_2Q_{\Phi_2}$ is the QDF with the coefficient matrix
	\begin{equation*}
		\tilde{\Phi}=\frac{1}{2}\begin{bmatrix}
			0 & 0 & 0 & \alpha_1 I \\
			0 & 0 & \alpha_2 I & 0\\
			0 & \alpha_2 I & 0 & 0\\
			\alpha_1 I & 0 & 0 & 0
		\end{bmatrix}.
	\end{equation*}
	We see that this coefficient matrix is indefinite for any choice of $\alpha_1>0,\alpha_2\geq 0$ and hence we cannot, in general, conclude that ${Q_\Phi \stackrel{\B}{\leq }0}$. This means that the condition \eqref{diss inter thm cond} is not necessarily satisfied. Hence, Theorem~\ref{t: first diss inter thm} is not applicable for interconnections of negative imaginary systems.
	
	Nonetheless, in \cite{xiong_negative_2010}, the authors have shown that, under a DC gain condition, a positive feedback interconnection of negative imaginary systems is asymptotically stable. For example, consider the behaviors $\bar{\B}_1,\bar{\B}_2 \in \mathcal{L}^{2}$ with the kernel representations $R_i\left(\frac{d}{dt}\right)w=0$ where
	\begin{equation}
		\begin{aligned}
			R_1(\xi)=& 
			\begin{bmatrix}
				-1 & \xi+1
			\end{bmatrix}, \\
			R_2(\xi)=&
			\begin{bmatrix}
				-2\xi^2-\xi-1 & 2\xi^4+7\xi^3+17\xi^2+17\xi+5
			\end{bmatrix},
		\end{aligned}
		\label{Ris of NI systems}
	\end{equation}
	respectively. In \cite{petersen_negative_2016}, the positive feedback interconnection of $\bar{\B}_1$ and $\bar{\B}_2$ was shown to be asymptotically stable. 
	
	Therefore, even though Theorem~\ref{t: first diss inter thm} is not applicable to interconnections of negative imaginary systems, such interconnections can be stable. This example suggests that one needs to consider a more general condition on the supply rates of dissipative systems than \eqref{diss inter thm cond}.
	\label{ex: NI}
\end{example}

The following theorem is the main result of this paper.
\begin{theorem}
	Let $\B_1,\B_2 \in \mathcal{L}^q$ and ${\Phi_1, \Phi_2 \in \R_s^{q \times q}[\zeta,\eta]}$. 
	Suppose there exists a storage function $Q_{\Psi_i}$ for $(\B_i, Q_{\Phi_i})$, for $i=1,2$. Define $\B := \B_1 \cap \B_2$ and assume that there exist scalars $\alpha_1>0$ and $\alpha_2\geq 0$, and $\Gamma\in \R_s^{q \times q}[\zeta,\eta]$ such that 
	\begin{equation}
		\frac{d}{dt}Q_\Gamma \stackrel{\B}{\geq}\alpha_1Q_{\Phi_1}+\alpha_2Q_{\Phi_2}.
		\label{main thm cond 1}
		\vspace{-1ex}
	\end{equation}
	Then,
	\begin{enumerate}[label=\roman*)]
		\item $\B$ is stable if
		\vspace{-1.5ex}
		\begin{equation}
			\alpha_1Q_{\Psi_1}+\alpha_2Q_{\Psi_2}-Q_\Gamma \stackrel{\B}{>} 0;
			\label{main thm cond strict}
		\end{equation} \label{main thm stat 1}
		\vspace{-2ex}
		\item $\B$ is asymptotically stable if $Q_{\Psi_1}$ satisfies the dissipation inequality strictly and
		\vspace{-1.5ex}
		\begin{equation}
			\alpha_1Q_{\Psi_1}+\alpha_2Q_{\Psi_2}-Q_\Gamma \stackrel{\B}{\geq} 0.
			\label{main thm cond non strict}
		\end{equation}\label{main thm stat 2}
	\end{enumerate} 
	\label{t: main diss inter thm}
\end{theorem}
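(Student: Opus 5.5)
The plan is to use the candidate Lyapunov function
\[
Q_V := \alpha_1 Q_{\Psi_1}+\alpha_2 Q_{\Psi_2}-Q_\Gamma ,
\]
which is induced by $V:=\alpha_1\Psi_1+\alpha_2\Psi_2-\Gamma\in\R_s^{q\times q}[\zeta,\eta]$, and then to invoke Theorem~\ref{t: lyap stab}. Two things are needed for that: a sign condition on $\frac{d}{dt}Q_V$ on $\B$, and a proof that $\B$ is autonomous.

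\textbf{Derivative of $Q_V$.} Since $\B\subseteq\B_i$, each dissipation inequality $\frac{d}{dt}Q_{\Psi_i}\stackrel{\B_i}{\leq}Q_{\Phi_i}$ also holds on $\B$. Using $\alpha_1>0$, $\alpha_2\geq0$ and \eqref{main thm cond 1}, every $w\in\B$ satisfies
\begin{align*}
\frac{d}{dt}Q_V(w) &= \alpha_1\Big(\frac{d}{dt}Q_{\Psi_1}(w)-Q_{\Phi_1}(w)\Big)+\alpha_2\Big(\frac{d}{dt}Q_{\Psi_2}(w)-Q_{\Phi_2}(w)\Big)\\
&\quad +\Big(\alpha_1Q_{\Phi_1}(w)+\alpha_2Q_{\Phi_2}(w)-\frac{d}{dt}Q_\Gamma(w)\Big)\;\leq\;\alpha_1\Big(\frac{d}{dt}Q_{\Psi_1}(w)-Q_{\Phi_1}(w)\Big)\leq 0 .
\end{align*}
In case~\ref{main thm stat 2}, strictness of the dissipation inequality for $Q_{\Psi_1}$ means $Q_{\Phi_1}-\frac{d}{dt}Q_{\Psi_1}\stackrel{\B_1}{>}0$. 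Hence $\frac{d}{dt}Q_V(w)=0$ forces $\big(Q_{\Phi_1}-\frac{d}{dt}Q_{\Psi_1}\big)(w)=0$, and therefore $w=0$. This gives $\frac{d}{dt}Q_V\stackrel{\B}{<}0$. Together with \eqref{main thm cond strict}, respectively \eqref{main thm cond non strict}, these are exactly the Lyapunov conditions of Theorem~\ref{t: lyap stab}.

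\textbf{Autonomy of $\B$.} I expect this to be the main obstacle, since Theorem~\ref{t: lyap stab} presupposes an autonomous behavior and the statement does not assume one. I would use the standard fact that a behavior in $\mathcal{L}^q$ which is not autonomous has a nontrivial controllable part, given by an image representation with $M\neq0$. Consequently it contains a nonzero compactly supported trajectory $w\in\B\cap\D(\R,\R^q)$: take $l$ compactly supported with $M(\frac{d}{dt})l\neq0$. For such a $w$, $Q_V(w)$ vanishes outside a compact set.

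In case~\ref{main thm stat 1}, $\frac{d}{dt}Q_V(w)\leq0$ and $Q_V(w)(t)\to0$ as $t\to-\infty$ give $Q_V(w)\leq0$. Combined with $Q_V(w)\geq0$, this yields $Q_V(w)\equiv0$, which contradicts \eqref{main thm cond strict}.

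In case~\ref{main thm stat 2}, integrate the bound above over $\R$. On the left,
\[
0=\int_{-\infty}^{\infty}\frac{d}{dt}Q_V(w)\,dt .
\]
On the right,
\[
\int_{-\infty}^{\infty}\frac{d}{dt}Q_V(w)\,dt\;\leq\;-\alpha_1\int_{-\infty}^{\infty}\Big(Q_{\Phi_1}-\frac{d}{dt}Q_{\Psi_1}\Big)(w)\,dt\;<\;0 .
\]
The last integral is strictly positive because its integrand is continuous, nonnegative, and not identically zero, the latter since $w\neq0$. This is a contradiction.

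So in both cases $\B$ is autonomous, and Theorem~\ref{t: lyap stab} gives stability in case~\ref{main thm stat 1} and asymptotic stability in case~\ref{main thm stat 2}.
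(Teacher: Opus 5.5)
Your proof is correct and follows the paper's argument: it uses the same candidate Lyapunov function $\alpha_1Q_{\Psi_1}+\alpha_2Q_{\Psi_2}-Q_\Gamma$ and the same chain of inequalities on $\B$, with strictness inherited from $Q_{\Psi_1}$ via $\alpha_1>0$ and $\B\subseteq\B_1$, and then appeals to Theorem~\ref{t: lyap stab}. Your extra step proving that $\B$ is autonomous (a nonzero compactly supported trajectory from the controllable part would contradict the Lyapunov conditions) is absent from the paper's proof, even though Theorem~\ref{t: lyap stab} and the definition of stability presuppose autonomy, so your version is the more complete one.
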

\vspace{-3ex}
Note that Theorem~\ref{t: main diss inter thm} replaces the non-positivity condition \eqref{diss inter thm cond} by the more general condition \eqref{main thm cond 1}. This introduces another QDF $Q_\Gamma$ that, in general, depends on both behaviors $\B_1$ and $\B_2$. For this reason, $Q_\Gamma$ is referred to as the \emph{coupling QDF}. An interesting aspect of Theorem~\ref{t: main diss inter thm} is that the Lyapunov functions in \eqref{main thm cond strict} and \eqref{main thm cond non strict} are not mere sums of storage functions since they also depend on the coupling QDF.

\begin{proof}[Proof of Theorem~\ref{t: main diss inter thm}]
	First, we show \ref{main thm stat 1}. Consider the candidate Lyapunov function
	\begin{equation}
		Q_\Psi:= \alpha_1 Q_{\Psi_1}+\alpha_2 Q_{\Psi_2}-Q_\Gamma.
		\label{main thm lyap}
	\end{equation}
	By assumption, we have that $Q_\Psi \stackrel{\B}{>}0$. Moreover, 
	\begin{equation*}
		\begin{aligned}
			\frac{d}{dt}Q_\Psi &= \alpha_1\frac{d}{dt}Q_{\Psi_1}+\alpha_2 \frac{d}{dt}Q_{\Psi_2}  -\frac{d}{dt}Q_\Gamma \\
			&\stackrel{\B}{\leq} \alpha_1 Q_{\Phi_1} + \alpha_2 Q_{\Phi_2}-\frac{d}{dt}Q_\Gamma \stackrel{\B}{\leq} 0.
		\end{aligned}
	\end{equation*}
	Thus, by Theorem~\ref{t: lyap stab}, $\B$ is stable.
	
	Now we show \ref{main thm stat 2}. By assumption, the candidate Lyapunov function \eqref{main thm lyap} satisfies $Q_\Psi \stackrel{\B}{\geq }0$. Moreover, 
	\begin{equation*}
		\begin{aligned}
			\frac{d}{dt}Q_\Psi &= \alpha_1\frac{d}{dt}Q_{\Psi_1}+\alpha_2 \frac{d}{dt}Q_{\Psi_2}  -\frac{d}{dt}Q_\Gamma \\
			&\stackrel{\B}{<} \alpha_1 Q_{\Phi_1} + \alpha_2 Q_{\Phi_2}  -\frac{d}{dt}Q_\Gamma \stackrel{\B}{\leq} 0,
		\end{aligned}
	\end{equation*}
	showing asymptotic stability by Theorem~\ref{t: lyap stab}.
\end{proof}
\vspace{0.5ex}
\begin{remark}
	Note that the conditions of this theorem consist of finding scalars $\alpha_1$ and $\alpha_2$ and a polynomial matrix $\Gamma$ such that certain QDF inequalities are satisfied on a behavior. There are many methods available in the literature for verifying such inequalities on a behavior; for example, see \cite{cotroneo2000lmi} for an LMI characterization. 
	Alternatively, one can adapt \cite[Algorithm~8]{belur_algorithmic_2002} for an LMI characterization of more general types of QDF inequalities on a behavior.
	
	For both of the aforementioned methods, the size of the LMI grows at most linearly with the orders of the supply rates and LTI systems. 
	\label{main thm remark}
\end{remark}
\subsection{Examples}Theorem~\ref{t: main diss inter thm} is an even more general dissipativity interconnection theorem than Theorem~\ref{t: first diss inter thm}. In this subsection, we show how Theorem~\ref{t: first diss inter thm} can be recovered from Theorem~\ref{t: main diss inter thm}, and how Theorem~\ref{t: main diss inter thm} can be applied to negative imaginary systems. 
\begin{example}[Theorem~\ref{t: first diss inter thm}]
	Let $\B_1,\B_2 \in \mathcal{L}^q$ and consider $\Phi_1, \Phi_2 \in \R_s^{q \times q}[\zeta,\eta]$. Define $\B := \B_1 \cap \B_2$. Under the assumptions of Theorem~\ref{t: first diss inter thm}, we have that \eqref{main thm cond 1} and \eqref{main thm cond non strict} hold with $\Gamma=0$. Hence, the statements \ref{first stat first diss inter thm} and \ref{second stat first diss inter thm} of Theorem~\ref{t: first diss inter thm} are special cases of statements \ref{main thm stat 1} and \ref{main thm stat 2} of Theorem~\ref{t: main diss inter thm}, respectively.
		\label{ex. prev thm}
\end{example}

\begin{example}[Negative imaginary systems]
	Consider the negative imaginary behaviors $\bar{\B}_1, \bar{\B}_2\in \mathcal{L}^q$ and their interconnection $\B =\B_1 \cap \B_2$ as in Example \ref{ex: NI}. 
	Recall the definition of $\Phi_i$ and $\Psi_i$ for $i = 1,2$ from this example as well. Then, $Q_{\Psi_i}$ is a storage function for $(\B_i,Q_{\Phi_i})$ for $i=1,2$. Moreover, for $\alpha_1=\alpha_2=1$,
	\vspace{-0.5ex} \begin{equation*}\alpha_1\Phi_1(\zeta,\eta)+\alpha_1\Phi_2(\zeta,\eta)=\!(\zeta+\eta)\frac{1}{2}\!\begin{bmatrix}
			0 & I \\
			I & 0
		\end{bmatrix}.\vspace{-0.5ex}\end{equation*} Then for
	\vspace{-0.5ex} \[\Gamma(\zeta,\eta):=\frac{1}{2}\begin{bmatrix}
		0 & I \\
		I & 0
	\end{bmatrix},\] \eqref{main thm cond 1} holds. Thus, using Theorem~\ref{t: main diss inter thm}, $\B$ is stable if 
	\vspace{-0.5ex}
	\begin{equation}
		Q_{\Psi_1}+Q_{\Psi_2}-Q_\Gamma \stackrel{\B}{>} 0.
		\label{NI cond}
	\end{equation}
	Therefore, unlike Theorem~\ref{t: first diss inter thm}, Theorem~\ref{t: main diss inter thm} is applicable to the case of negative imaginary systems. 

	We verify the condition 
	\eqref{NI cond} for the behaviors $\bar{\B}_i = \ker R_i$ with $R_i$ as in \eqref{Ris of NI systems}, for $i=1,2$, by solving the associated LMIs (see Remark \ref{main thm remark}). The following are the coefficient matrices $\tilde{\Psi}_1,\tilde{\Psi}_2$ of the storage functions $Q_{\Psi_1},Q_{\Psi_2}$ that satisfy them:
	\begin{equation*}
		\resizebox{\hsize}{!}{$%
			\tilde{\Psi}_1=
			\begin{bmatrix}
				\begin{bmatrix}
					0.5 & 0 \\ 0 & 0
				\end{bmatrix} & 0 \\ 0 & 0_{8\times 8}
			\end{bmatrix}
			, \;
			\tilde{\Psi}_2=\begin{bmatrix}
				0.07 & -0.30 &  0.00 & -0.16 & -0.03 &  0.08 & -0.01 &  0.11 &  0.00 &  0.03 \\
				-0.30 &  3.73 &  0.06 & -0.19 &  0.26 & -1.39 &  0.04 & -0.93 & -0.02 & -0.29 \\
				0.00 &  0.06 &  0.09 & -0.38 & -0.03 & -0.08 & -0.01 &  0.01 &  0.00 &  0.03 \\
				-0.16 & -0.19 & -0.38 &  1.60 &  0.22 & -0.12 &  0.12 & -0.61 & -0.06 & -0.31 \\
				-0.03 &  0.26 & -0.03 &  0.22 &  0.02 & -0.05 & -0.01 &  0.01 &  0.01 & -0.01 \\
				0.08 & -1.39 & -0.08 & -0.12 & -0.05 &  0.83 &  0.12 & -0.08 & -0.06 & -0.03 \\
				-0.01 &  0.04 & -0.01 &  0.12 & -0.01 &  0.12 &  0.00 &  0.05 &  0.00 &  0.01 \\
				0.11 & -0.93 &  0.01 & -0.61 &  0.01 & -0.08 &  0.05 & -0.17 & -0.03 & -0.04 \\
				0.00 & -0.02 &  0.00 & -0.06 &  0.01 & -0.06 &  0.00 & -0.03 &  0.00 & -0.01 \\
				0.03 & -0.29 &  0.03 & -0.31 & -0.01 & -0.03 &  0.01 & -0.04 & -0.01 &  0.00
			\end{bmatrix}.
			$%
		}%
	\end{equation*}
	Thus, we confirm that $\B$ is stable. 
	
	We note that one cannot directly use our theorem to conclude asymptotic stability, as the notion of strictly negative imaginary transfer functions in \cite{xiong_negative_2010} is not equivalent to negative imaginary behaviors with strict dissipation inequalities. For example, $\bar{\B}_1$ was shown in \cite{petersen_negative_2016} to have a strictly negative imaginary transfer function. However, there does not exists $Q_{\bar{\Psi}_1}$ such that
	$\frac{d}{dt}Q_{\bar{\Psi}_1}(w)< Q_{\bar{\Phi}_1}(w)= \dot{y}^\top u,$
	for all $w=(u,y)\in \bar{\B}_1$. 
	One can easily see this from the fact that $\bar{\B}_1$ contains constant trajectories $u=y=c\neq 0$ for which both sides of the inequality above are zero. 
	
	Nonetheless, we have that the only constant trajectory in the interconnected behavior $\B$ is the zero trajectory. 
	Consequently, $Q_{\Psi_1}$ satisfies its dissipation inequality strictly on $\B$.
	Then, the Lyapunov function for $\B$ as in \eqref{main thm lyap} with ${\alpha_1=\alpha_2=1}$ is such that $Q_\Psi \stackrel{\B}{\geq}0$ and 
	\[\frac{d}{dt}Q_\Psi =\frac{d}{dt}Q_{\Psi_1}+\frac{d}{dt}Q_{\Psi_2}-\frac{d}{dt}Q_\Gamma \stackrel{\B}{<}Q_{\Phi_1}+Q_{\Phi_2}  -\frac{d}{dt}Q_\Gamma =0. \]
	Hence, in agreement with \cite{petersen_negative_2016}, we can conclude that the interconnection is asymptotically stable. 
	
	Theorem~\ref{t: main diss inter thm}, applied to negative imaginary systems, is related to other results in the literature on this topic \cite{xiong_negative_2010,meskin_generalized_2024}. The main difference is that we do not necessarily require the storage functions to be positive. Moreover, in \cite[Theorem~1]{xiong_negative_2010}, the authors have shown that a DC gain condition on the systems is necessary and sufficient for asymptotic stability.  In our future work, we will explore the connection between this condition and the condition \eqref{main thm cond non strict} together with the exclusion of nonzero constant trajectories in the interconnected behavior.
\end{example}

\section{Conclusions and future work}
\label{sec:conclusion}
In this paper, we have proven two interconnection stability theorems for dissipative systems. We have leveraged the behavioral approach and, in particular, QDFs to capture general dynamic supply rates. Our first result, Theorem~\ref{t: first diss inter thm}, establishes stability of an interconnected system by means of a Lyapunov function that is the (weighted) sum of storage functions. This result generalizes several existing theorems from the literature, including the small gain and passivity theorems, and results for delta dissipative systems.

The main contribution of this paper is Theorem~\ref{t: main diss inter thm}, which establishes more general conditions for (asymptotic) stability of various interconnections between dissipative systems with dynamic supply rates. It introduces a new class of Lyapunov functions that are the sum of (indefinite) storage functions and a so-called coupling QDF, which turned out to be crucial for certain dynamic supply rates, like those for negative imaginary systems. 

We note that the stability of interconnections can also be studied using integral quadratic constraints (IQCs) \cite{megretski_system_1997}. As already discussed in \cite{seiler_stability_2015,scherer_dissipativity_2022}, dissipativity-based analysis and IQCs are strongly related. Differences between our work and the main IQC theorem \cite[Theorem~1]{megretski_system_1997} lie in the use of a coupling QDF. Our conditions are different in nature, as they do not solely consist of conditions on (dissipative) properties of the individual sub-systems, but also depend on their behavioral coupling. 
An advantage of using the coupling QDF is that it allows us to conclude on the stability of interconnections of negative imaginary systems, which, as was explained in \cite{lanzon_stability_2008}, cannot be done using IQCs.

It is of interest to study the extension of our work to interconnections of more than two behaviors, as well as the study of partial, rather than full, interconnections. In our future work, we will also explore a generalization towards interconnections of a dissipative LTI behavior and a possibly nonlinear and time-varying system.
A similar problem was studied in \cite{takaba_stability_2005,willems_dissipativity_2007} for the case of feedback interconnections, where Lyapunov functions consisting of a sum of storage functions were used. We will study how to generalize their work using our extended class of Lyapunov functions.

Lastly, there is growing interest in the discrete-time counterpart of QDFs, quadratic difference forms, in particular, in data-driven control \cite{van_waarde_behavioral_2024,maupong_lyapunov_2017,rosa_one-shot_2021}. We will explore a discrete-time version of our results that will allow for the data-driven stability analysis of system interconnections.

\appendix
The following technical lemma shows under which conditions dissipativity properties of a given behavior also hold on an associated auxiliary behavior.
\begin{lemma}
	Let $\bar{\mathcal{B}} \in \mathcal{L}^q$ and $\bar{\Phi} \in \R_s^{q \times q}[\zeta,\eta]$. Suppose that $Q_{\bar{\Psi}}$ is a storage function for $(\bar{\B},Q_{\bar{\Phi}})$. Let $N\in \R^{r \times q}$ and define $\B:=\set{w}{Nw \in \bar{\B}}$. Moreover, define
	\vspace{-0.5ex}
	\begin{equation*}
		\Phi(\zeta,\eta) := N^\top \bar{\Phi}(\zeta,\eta)N \quad \text{and} \quad \Psi(\zeta,\eta) := N^\top \bar{\Psi}(\zeta,\eta)N.
	\end{equation*}
	The following statements hold.
	\begin{enumerate}[label=\roman*)]
		\item The QDF $Q_\Psi$ is a storage function for $(\B,Q_{\Phi})$. \label{lemma stat 1}
		\item If $N$ has full column rank, then 
		\[\frac{d}{dt}Q_{\bar{\Psi}} \stackrel{\bar{\B}}{<}Q_{\bar{\Phi}} \implies \frac{d}{dt}Q_\Psi \stackrel{\B}{<}Q_\Phi. \vspace{-1.5ex}\] \label{lemma stat 2}
		\item If $Q_{\bar{\Psi}} \stackrel{\bar{\B}}{\geq}0$, then $Q_\Psi \stackrel{\B}{\geq}0$. \label{lemma stat 3}
		\item If $Q_{\bar{\Psi}} \stackrel{\bar{\B}}{>}0$ and $N$ has full column rank, then $Q_\Psi \stackrel{\B}{>}0$. \label{lemma stat 4}
	\end{enumerate}
	\label{lemma barB dissipativity}
\end{lemma}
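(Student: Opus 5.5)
The whole proof rests on a single identity. Because $N$ is a constant matrix, $\frac{d^k}{dt^k}(Nw)=N\frac{d^kw}{dt^k}$ for every $k$. Writing $\bar{\Phi}(\zeta,\eta)=\sum_{k,l}\bar{\Phi}_{kl}\zeta^k\eta^l$, we have $\Phi_{kl}=N^\top\bar{\Phi}_{kl}N$, and therefore
\begin{equation*}
Q_\Phi(w)=\sum_{k,l}\Big(\tfrac{d^k w}{dt^k}\Big)^\top N^\top\bar{\Phi}_{kl}N\Big(\tfrac{d^l w}{dt^l}\Big)=Q_{\bar{\Phi}}(Nw)
\end{equation*}
for all $w\in\mathcal{C}^\infty(\R,\R^q)$. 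The same argument gives $Q_\Psi(w)=Q_{\bar{\Psi}}(Nw)$. Applying it to $\dot{\bar{\Psi}}(\zeta,\eta)=(\zeta+\eta)\bar{\Psi}(\zeta,\eta)$, and noting that $N^\top\dot{\bar{\Psi}}N=\dot{\Psi}$, we also get $\frac{d}{dt}Q_\Psi(w)=\frac{d}{dt}Q_{\bar{\Psi}}(Nw)$. We also need $\B\in\mathcal{L}^q$. If $\bar{\B}=\ker \bar R(\frac{d}{dt})$, then $\B=\ker (\bar R N)(\frac{d}{dt})$, so $\B$ is again an LTI behavior. Finally, by definition of $\B$, every $w\in\B$ satisfies $Nw\in\bar{\B}$.

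With this identity, parts \ref{lemma stat 1} and \ref{lemma stat 3} follow directly.

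For \ref{lemma stat 1}: for $w\in\B$ we have
\[
\frac{d}{dt}Q_\Psi(w)=\frac{d}{dt}Q_{\bar\Psi}(Nw)\le Q_{\bar\Phi}(Nw)=Q_\Phi(w),
\]
where the inequality holds because $Nw\in\bar{\B}$ and $Q_{\bar\Psi}$ is a storage function for $(\bar\B,Q_{\bar\Phi})$.

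For \ref{lemma stat 3}: for $w\in\B$, $Q_\Psi(w)=Q_{\bar\Psi}(Nw)\ge0$.

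Parts \ref{lemma stat 2} and \ref{lemma stat 4} concern strictness, which here means that the only trajectory achieving equality (identically in $t$) is zero. The nonstrict inequalities are already covered by \ref{lemma stat 1} and \ref{lemma stat 3}.

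For \ref{lemma stat 2}: suppose $w\in\B$ and $\frac{d}{dt}Q_\Psi(w)=Q_\Phi(w)$ for all $t$. By the identity, this is equality of the dissipation inequality for $Nw\in\bar\B$. The strict inequality on $\bar\B$ then forces $Nw=0$. Since $N$ has full column rank, it is injective on $\R^q$ pointwise in $t$, so $w(t)=0$ for all $t$.

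Part \ref{lemma stat 4} is the same argument: $Q_\Psi(w)=0$ gives $Q_{\bar\Psi}(Nw)=0$, hence $Nw=0$, hence $w=0$.

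I do not expect any real obstacle. The only point needing care is checking that the coefficient-wise definition of $\Phi$ and $\Psi$ is compatible with the derivative map $\Phi\mapsto(\zeta+\eta)\Phi$. It is, since that map acts on the scalar indeterminates and commutes with the constant congruence by $N$.
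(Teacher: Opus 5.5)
Your proposal is correct and follows the paper's proof essentially line by line. The identities $Q_\Phi(w)=Q_{\bar\Phi}(Nw)$ and $Q_\Psi(w)=Q_{\bar\Psi}(Nw)$ give (i) and (iii) directly, and in (ii) and (iv) equality forces $Nw=0$, hence $w=0$ by full column rank. You also justify these identities coefficient-wise and check $\B=\ker(\bar R N)(\frac{d}{dt})\in\mathcal{L}^q$; the paper leaves both implicit, and both tacitly read the statement's typo as $\bar\B\in\mathcal{L}^r$ and $\bar\Phi,\bar\Psi\in\R_s^{r\times r}[\zeta,\eta]$, which is what it must mean.
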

\begin{proof}
	First, we show \ref{lemma stat 1}. We have that 
	\begin{equation}\frac{d}{dt}Q_{\bar{\Psi}} \stackrel{\bar{\B}}{\leq}Q_{\bar{\Phi}}.\label{lemma p diss ineq}\end{equation} Then, for all $w\in \B$ we have that 
	\begin{equation}\frac{d}{dt}Q_{\Psi}(w) =\frac{d}{dt}Q_{\bar{\Psi}}(Nw) \leq Q_{\bar{\Phi}}(Nw)=Q_\Phi(w). \label{bar B storage lemma ineq}\end{equation}
	Hence, $Q_{\Psi}$ is a storage function for $(\B,Q_{\Phi})$.
	
	Now, we show \ref{lemma stat 2}. Let $w\in \B$. As $Q_{\bar{\Psi}}$ satisfies the dissipation inequality \eqref{lemma p diss ineq} strictly, \eqref{bar B storage lemma ineq} holds and
	\begin{equation*}\frac{d}{dt}Q_{\Psi}(w) =Q_{\Phi}(w)\end{equation*} 
	if and only if $Nw=0$. As $N$ has full column rank, this is equivalent to $w=0$. Hence, $Q_{\Psi}$ satisfies 
	\begin{equation*}\frac{d}{dt}Q_{\Psi} \stackrel{\B}{<}Q_{\Phi}. \vspace{-0.5ex}\end{equation*}
	
	To show \ref{lemma stat 3} suppose that $Q_{\bar{\Psi}} \stackrel{\bar{\B}}{\geq}0$. Then, for all $w\in \B$, $Q_{\Psi}(w)=Q_{\bar{\Psi}}(Nw)\geq 0$. Hence, $Q_{\Psi} \stackrel{\B}{\geq}0$. 
	
	Lastly, suppose that $Q_{\bar{\Psi}} \stackrel{\bar{\B}}{>}0$ and that $N$ has full column rank. Let $w\in \B$. Then, as was shown for \ref{lemma stat 3}, $Q_{\Psi}\stackrel{\B}{\geq}0$, and $Q_{\Psi}(w)=0$ if and only if $Nw=0$, which is equivalent to $w=0$. Hence, $Q_{\Psi} \stackrel{\B}{>}0$. This shows \ref{lemma stat 4}.
\end{proof}
\bibliographystyle{IEEEtran}
\bibliography{references}

@article{belur_algorithmic_2002,
	title = {Algorithmic {Issues} in the {Synthesis} of {Dissipative} {Systems}},
	volume = {8},
	issn = {1387-3954},
	doi = {10.1076/mcmd.8.4.407.15848},
	number = {4},
	journal = {Mathematical and Computer Modelling of Dynamical Systems},
	publisher = {Taylor \& Francis},
	author = {Belur, Madhu N. and Trentelman, Harry L.},
	month = dec,
	year = {2002},
	pages = {407--428},
}

@article{petersen_negative_2016,
	title = {Negative imaginary systems theory and applications},
	volume = {42},
	issn = {13675788},
	doi = {10.1016/j.arcontrol.2016.09.006},
	language = {en},
	urldate = {2026-02-24},
	journal = {Annual Reviews in Control},
	author = {Petersen, Ian R.},
	year = {2016},
	pages = {309--318},
}

@inproceedings{meskin_generalized_2024,
	title = {Generalized {Dissipativity} and {Nonlinear} {Negative} {Imaginary} {Systems}},
	issn = {2766-6522},
	doi = {10.1109/CONTROL60310.2024.10531998},
	booktitle = {{UKACC} {International} {Conference} on {Control}},
	author = {Meskin, Nader and Mabrok, Mohamed A. and Lanzon, Alexander},
	month = apr,
	year = {2024},
	pages = {137--142},
}

@book{desoer_feedback_2009,
	series = {Classics in {Applied} {Mathematics}},
	title = {Feedback {Systems}},
	isbn = {978-0-89871-670-2},
	doi = {10.1137/1.9780898719055},
	urldate = {2026-02-17},
	publisher = {Society for Industrial and Applied Mathematics},
	author = {Desoer, Charles A. and Vidyasagar, M.},
	month = jan,
	year = {2009},
}

@article{schweidel_compositional_2022,
	title = {Compositional {Analysis} of {Interconnected} {Systems} {Using} {Delta} {Dissipativity}},
	volume = {6},
	issn = {2475-1456},
	doi = {10.1109/LCSYS.2021.3084974},
	journal = {IEEE Control Systems Letters},
	author = {Schweidel, Katherine S. and Arcak, Murat},
	year = {2022},
	pages = {662--667},
}

@incollection{arcak_stability_2016,
	title = {Stability of {Interconnected} {Systems}},
	isbn = {978-3-319-29928-0},
	doi = {10.1007/978-3-319-29928-0_2},
	booktitle = {Networks of {Dissipative} {Systems}: {Compositional} {Certification} of {Stability}, {Performance}, and {Safety}},
	publisher = {Springer International Publishing},
	author = {Arcak, Murat and Meissen, Chris and Packard, Andrew},
	year = {2016},
	pages = {13--21},
}

@article{willems_dissipative_1972,
	title = {Dissipative dynamical systems part {I}: {General} theory},
	volume = {45},
	issn = {1432-0673},
	shorttitle = {Dissipative dynamical systems part {I}},
	doi = {10.1007/BF00276493},
	language = {en},
	number = {5},
	journal = {Archive for Rational Mechanics and Analysis},
	author = {Willems, Jan C.},
	month = jan,
	year = {1972},
	pages = {321--351},
}

@article{willems_dissipative_1972-1,
	title = {Dissipative dynamical systems {Part} {II}: {Linear} systems with  supply rates},
	volume = {45},
	issn = {1432-0673},
	shorttitle = {Dissipative dynamical systems {Part} {II}},
	doi = {10.1007/BF00276494},
	language = {en},
	number = {5},
	journal = {Archive for Rational Mechanics and Analysis},
	author = {Willems, Jan C.},
	month = jan,
	year = {1972},
	pages = {352--393},
}

@book{van_der_schaft_dissipative_2017,
	title = {L2-{Gain} and {Passivity} {Techniques} in {Nonlinear} {Control}},
	publisher = {Springer International Publishing},
	author = {van der Schaft, Arjan},
	year = {2017}
}

@book{polderman_introduction_1998,
	title = {Introduction to {Mathematical} {Systems} {Theory}},
	isbn = {978-1-4757-2955-9 978-1-4757-2953-5},
	publisher = {Springer},
	author = {Polderman, Jan Willem and Willems, Jan C.},
	year = {1998},
	doi = {10.1007/978-1-4757-2953-5},
}

@article{willems_quadratic_1998,
	title = {On {Quadratic} {Differential} {Forms}},
	volume = {36},
	issn = {0363-0129},
	doi = {10.1137/S0363012996303062},
	number = {5},
	journal = {SIAM Journal on Control and Optimization},
	author = {Willems, J. C. and Trentelman, H. L.},
	month = jan,
	year = {1998},
	pages = {1703--1749},
}

@article{lanzon_stability_2008,
	title = {Stability {Robustness} of a {Feedback} {Interconnection} of {Systems} {With} {Negative} {Imaginary} {Frequency} {Response}},
	volume = {53},
	issn = {1558-2523},
	doi = {10.1109/TAC.2008.919567},
	number = {4},
	urldate = {2025-11-04},
	journal = {IEEE Transactions on Automatic Control},
	author = {Lanzon, Alexander and Petersen, Ian R.},
	month = may,
	year = {2008},
	pages = {1042--1046},
}

@article{trentelman_every_1997,
	series = {System and {Control} {Theory} in the {Behavioral} {Framework}},
	title = {Every storage function is a state function},
	volume = {32},
	issn = {0167-6911},
	doi = {10.1016/S0167-6911(97)00081-9},
	number = {5},
	journal = {Systems \& Control Letters},
	author = {Trentelman, H. L. and Willems, J. C.},
	month = dec,
	year = {1997},
	pages = {249--259},
}

@article{xiong_negative_2010,
	title = {A {Negative} {Imaginary} {Lemma} and the {Stability} of {Interconnections} of {Linear} {Negative} {Imaginary} {Systems}},
	volume = {55},
	issn = {1558-2523},
	doi = {10.1109/TAC.2010.2052711},
	number = {10},
	journal = {IEEE Transactions on Automatic Control},
	author = {Xiong, Junlin and Petersen, Ian R. and Lanzon, Alexander},
	month = oct,
	year = {2010},
	pages = {2342--2347},
}

@article{willems_interconnections_1997,
	title = {On interconnections, control, and feedback},
	volume = {42},
	issn = {1558-2523},
	doi = {10.1109/9.557576},
	number = {3},
	journal = {IEEE Transactions on Automatic Control},
	author = {Willems, Jan C.},
	month = mar,
	year = {1997},
	pages = {326--339},
}

@article{willems_time_1986I,
	title = {From time series to linear system—{Part} {I}. {Finite} dimensional linear time invariant systems},
	volume = {22},
	issn = {0005-1098},
	doi = {10.1016/0005-1098(86)90066-X},
	number = {5},
	journal = {Automatica},
	author = {Willems, Jan C.},
	month = sep,
	year = {1986},
	pages = {561--580},
}

@article{willems_time_1986II,
	title = {From time series to linear system—{Part} {II}. {Exact} modelling},
	volume = {22},
	issn = {0005-1098},
	doi = {10.1016/0005-1098(86)90005-1},
	number = {6},
	journal = {Automatica},
	author = {Willems, Jan C.},
	month = nov,
	year = {1986},
	pages = {675--694},
}

@article{takaba_stability_2005,
	title = {Stability of {Dissipative} {Interconnections}},
	volume = {40},
	issn = {2188-5079},
	language = {ja},
	number = {3-1-1-6},
	journal = {IEICE Proceedings Series},
	publisher = {The Institute of Electronics, Information and Communication Engineers},
	author = {Takaba, Kiyotsugu and Willems, Jan C.},
	month = oct,
	year = {2005},
}

@article{zames_input-output_1966,
	title = {On the input-output stability of time-varying nonlinear feedback systems {Part} one: {Conditions} derived using concepts of loop gain, conicity, and positivity},
	volume = {11},
	issn = {1558-2523},
	shorttitle = {On the input-output stability of time-varying nonlinear feedback systems {Part} one},
	doi = {10.1109/TAC.1966.1098316},
	number = {2},
	journal = {IEEE Transactions on Automatic Control},
	author = {Zames, G.},
	month = apr,
	year = {1966},
	pages = {228--238},
}

@article{willems_dissipativity_2007,
	title = {Dissipativity and stability of interconnections},
	volume = {17},
	copyright = {Copyright © 2006 John Wiley \& Sons, Ltd.},
	issn = {1099-1239},
	doi = {10.1002/rnc.1121},
	language = {en},
	number = {5-6},
	urldate = {2026-03-20},
	journal = {International Journal of Robust and Nonlinear Control},
	author = {Willems, Jan C. and Takaba, Kiyotsugu},
	year = {2007},
	pages = {563--586},
}

@article{maupong_lyapunov_2017,
	series = {20th {IFAC} {World} {Congress}},
	title = {On {Lyapunov} functions and data-driven dissipativity},
	volume = {50},
	issn = {2405-8963},
	doi = {10.1016/j.ifacol.2017.08.1052},
	number = {1},
	journal = {IFAC-PapersOnLine},
	author = {Maupong, T. M. and Mayo-Maldonado, J. C. and Rapisarda, P.},
	month = jul,
	year = {2017},
	pages = {7783--7788},
}

@article{van_waarde_behavioral_2024,
	title = {A {Behavioral} {Approach} to {Data}-{Driven} {Control} {With} {Noisy} {Input}–{Output} {Data}},
	volume = {69},
	issn = {1558-2523},
	doi = {10.1109/TAC.2023.3275014},
	number = {2},
	journal = {IEEE Transactions on Automatic Control},
	author = {van Waarde, Henk J. and Eising, Jaap and Camlibel, M. Kanat and Trentelman, Harry L.},
	month = feb,
	year = {2024},
	pages = {813--827},
}

@inproceedings{rosa_one-shot_2021,
	title = {On the one-shot data-driven verification of dissipativity of {LTI} systems with general quadratic supply rate function},
	doi = {10.23919/ECC54610.2021.9654933},
	booktitle = {{European} {Control} {Conference}},
	author = {Rosa, Tábitha E. and Jayawardhana, Bayu},
	month = jun,
	year = {2021},
	pages = {1291--1296},
}

@article{scherer_dissipativity_2022,
	title = {Dissipativity and {Integral} {Quadratic} {Constraints}: {Tailored} {Computational} {Robustness} {Tests} for {Complex} {Interconnections}},
	volume = {42},
	issn = {1941-000X},
	doi = {10.1109/MCS.2022.3157117},
	number = {3},
	journal = {IEEE Control Systems Magazine},
	author = {Scherer, Carsten W.},
	month = jun,
	year = {2022},
	pages = {115--139},
}

@article{seiler_stability_2015,
	title = {Stability {Analysis} {With} {Dissipation} {Inequalities} and {Integral} {Quadratic} {Constraints}},
	volume = {60},
	issn = {1558-2523},
	doi = {10.1109/TAC.2014.2361004},
	number = {6},
	journal = {IEEE Transactions on Automatic Control},
	author = {Seiler, Peter},
	month = jun,
	year = {2015},
	pages = {1704--1709},
}

@article{megretski_system_1997,
	title = {System {Analysis} via {Integral} {Quadratic} {Constraints}},
	author = {Megretski, Alexandre and Rantzer, Anders},
	journal={IEEE transactions on automatic control},
	volume={42},
	number={6},
	pages={819--830},
	year={1997},
	publisher={IEEE}
}

@book{zhou_essentials_1998,
	series = {Prentice {Hall} {Modular} {Series} for {Eng}},
	title = {Essentials of {Robust} {Control}},
	isbn = {978-0-13-525833-0},
	publisher = {Prentice Hall},
	author = {Zhou, K. and Doyle, J.C.},
	year = {1998},
	lccn = {97029568},
}

@inproceedings{cotroneo2000lmi,
	title={An LMI condition for nonnegativity of a Quadratic Differential Form along a Behavior},
	author={Cotroneo, T and Willems, JC},
	booktitle={Proc. of 14th Int. Symp. on Math. Theory of Netw. Syst.(MTNS2000)},
	year={2000}
}

\end{document}